\documentclass[11pt]{article}
\usepackage{amsthm,amsmath,amssymb,bbm}

\newtheorem{thm}{Theorem}
\newtheorem{prop}[thm]{Proposition}
\newtheorem{lem}[thm]{Lemma}
\newtheorem{cor}[thm]{Corollary}

\def\Ex{{\mathbb E}}
\def\Pr{{\mathbb P}}
\def\ve{\varepsilon}

\def\ind{\mathbbm{1}}

\title{$L_1$-norm of combinations of products of independent random variables}
\author{Rafa{\l} Lata{\l}a\thanks{Research supported by the NCN grant DEC-2012/05/B/ST1/00412}}
\date{}

\begin{document}

\maketitle

\begin{abstract}
We show that $L_1$-norm of linear combinations (with scalar or vector coefficients) of products of i.i.d.\ nonnegative mean one
random variables is comparable to  $l_1$-norm of coefficients.
\end{abstract}

\section{Introduction and Main Results}

Let $X,X_1,X_2,\ldots$ be i.i.d. nonnegative r.v.'s such that $\Ex X=1$ and
$\Pr(X=1)< 1$. Define 
\begin{equation}
\label{defR}
R_0:=1 \quad \mbox{ and }\quad 
R_i:=\prod_{j=1}^{i}X_j \ \mbox{ for }i=1,2,\ldots.
\end{equation}

Obviously $\Ex R_i=1$ and therefore for any $a_0,a_1,\ldots,a_n$,
\begin{equation}
\label{trivial}
\Ex\bigg|\sum_{i=0}^n a_iR_i\bigg|\leq \sum_{i=0}^n |a_i|.
\end{equation}

Micha{\l} Wojciechowski (personal communication) asked whether inequality \eqref{trivial} may be reversed in the case
when  $X=1+\cos(Y)$, where $Y$ has the uniform distribution on $[0,2\pi]$. 
In \cite{MW} he showed that for such variables there exist sequences $(a_i)$ such that $|a_i|\leq 1$,
$|\sum_{i=0}^k a_i|\leq C$ for all $k\leq n$ and 
$\Ex|\sum_{i=0}^n a_iR_i|\geq cn$. Resently he posed a more general problem.

\medskip

\noindent
{\bf Problem.} Is it true that for any i.i.d. sequence as above estimate \eqref{trivial} may be reversed, i.e.
there exists a constant $c>0$ that depends only on the distribution of $X$ such that
\[
\Ex\bigg|\sum_{i=0}^n a_iR_i\bigg|\geq c\sum_{i=0}^n |a_i| \quad
\mbox{ for any } a_0,\ldots,a_n?
\]

The aim of this note is to give an affirmative answer to the Wojciechowski question even in the more general situation
of coefficients in a normed space $(F,\|\ \|)$.

First we study a simpler case when $X$ takes with positive probability values close to zero.
We  prove a more general result that does not require the identical distribution assumption.
Namely we consider  sequences $(X_i)$ satisfying the following assumptions:
\begin{equation}
\label{ass1}
X_1,X_2,\ldots \mbox{ are independent, nonnegative r.v's with mean one,} 
\end{equation}
\begin{equation}
\label{ass2}
\Ex \sqrt{X_i}\leq \lambda<1 \quad \mbox{ and }\quad  \Ex|X_i-1|\geq \mu>0 \quad \mbox{ for all } i. 
\end{equation}

Notice that if $X$ is a nondegenerate nonnegative random variable, then $\Ex\sqrt{X}<\sqrt{\Ex X}$
and $\Ex|X-1|>0$, hence \eqref{ass2} holds for i.i.d. mean one nonnegative sequences.

\begin{thm}
\label{thm:noniid1}
Let $R_i$ be as in \eqref{defR}, where $X_1,X_2,\ldots$ satisfy assumptions \eqref{ass1} and \eqref{ass2}.
Then for any coefficients $v_0,v_1,\ldots,v_n$ in a normed space $(F,\|\ \|)$ we have
\[
\Ex\bigg\|\sum_{i=0}^nv_iR_i\bigg\|\geq c\sum_{i=0}^n\|v_i\|,
\] 
where
\[
c=\frac{1}{64}\min\{\mu,1\}\min_{1\leq i\leq n}\Pr\Big(X_i\leq \frac{(1-\lambda)^2}{256}\min\{\mu,1\}\Big).
\]
\end{thm}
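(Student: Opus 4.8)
The plan is to build the lower bound one coefficient at a time, using a symmetrization inequality as the basic detection device and the smallness of the $X_i$ to decouple the ``future'' of the sum. The device is the following elementary fact, valid in an \emph{arbitrary} normed space (this generality is essential, since no cotype or square-function estimate is available): if $Y$ is mean one with $\Ex|Y-1|\ge\mu$ and is independent of the fixed vectors $a,b\in F$, then
\[
\Ex\|a+Yb\|\ge\tfrac12\Ex\|(Y-Y')b\|=\tfrac12\Ex|Y-Y'|\,\|b\|\ge\tfrac{\mu}{2}\|b\|,
\]
where $Y'$ is an independent copy of $Y$: the first inequality holds because $\Ex\|Z-Z'\|\le2\Ex\|Z\|$ with $Z=a+Yb$, and $\Ex|Y-Y'|\ge\Ex|Y-\Ex Y'|=\Ex|Y-1|$ by Jensen. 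The force of this bound is that the ``rest'' $a$ is completely arbitrary: averaging over a single spread-out factor reveals $\|b\|$ no matter how the other terms are arranged, so there is no cancellation to fight.

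To apply this I exploit the product structure. Writing $G_i:=\sum_{j=i}^n v_j\prod_{k=i+1}^j X_k$ (so that $G_n=v_n$, $G_0=\sum_i v_iR_i$, and $G_i=v_i+X_{i+1}G_{i+1}$ with $G_{i+1}$ independent of $X_{i+1}$), one has for each $i$ the decomposition
\[
\sum_{j=0}^n v_jR_j=S_{i-1}+R_{i-1}X_iG_i,\qquad S_{i-1}:=\sum_{j<i}v_jR_j,
\]
in which $R_{i-1}$ and $G_i$ do not involve $X_i$. Conditioning on $(X_k)_{k\ne i}$ and applying the lemma with $a=S_{i-1}$, $b=R_{i-1}G_i$, $Y=X_i$ gives $\Ex[\,\|\sum_jv_jR_j\|\mid(X_k)_{k\ne i}\,]\ge\frac{\mu}{2}R_{i-1}\|G_i\|$. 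The coefficient $v_i$ is then isolated by intersecting with the drop event $\{X_{i+1}\le\delta\}$, on which $\|G_i-v_i\|=X_{i+1}\|G_{i+1}\|\le\delta\|G_{i+1}\|$; since $\Ex R_{i-1}=1$ and $R_{i-1},X_{i+1},G_{i+1}$ are independent, integrating yields
\[
\Ex\Big[\Big\|\sum_jv_jR_j\Big\|\,\ind_{\{X_{i+1}\le\delta\}}\Big]\ge \frac{\mu}{2}\,\Pr(X_{i+1}\le\delta)\,\big(\|v_i\|-\delta\,\Ex\|G_{i+1}\|\big).
\]
The first assumption enters only through $\delta$: from $\Ex\sqrt{X_i}\le\lambda$ we get $\Ex\sqrt{R_i}\le\lambda^i$ and hence $\sum_i\Ex\sqrt{R_i}\le(1-\lambda)^{-1}$, which is what will allow the error terms $\delta\,\Ex\|G_{i+1}\|$ to be absorbed uniformly in $n$ once $\delta\sim(1-\lambda)^2\min\{\mu,1\}$.

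The remaining, and genuinely delicate, step is to sum these single-index estimates into $c\sum_i\|v_i\|$. One cannot simply add the inequalities over $i$, because the events $\{X_{i+1}\le\delta\}$ are independent rather than disjoint: $\sum_i\ind_{\{X_{i+1}\le\delta\}}$ is typically a constant fraction of $n$, so the left-hand sides overcount by a factor of order $n$. The resolution is to organize the detections around the successive drop times $\{k:X_k\le\delta\}$, using each drop to isolate the coefficient immediately preceding it, so that the detecting events become essentially disjoint; the crucial point is that $\Ex R_{i-1}=1$, so the rare event of reaching index $i$ without a drop carries a correspondingly large scale $R_{i-1}$, and the contribution of each $v_i$ survives undiminished for \emph{every} $i\le n$ rather than being damped out for large $i$. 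A triangle-inequality bookkeeping (the total variation of a sequence of partial sums is at most twice the sum of their norms) then converts the resulting weighted control of partial sums into $\sum_i\|v_i\|$, with $\mu$ coming from the symmetrization and the factor $\min_i\Pr(X_i\le\delta)$ accounting for the drop frequency. I expect the main obstacle to be exactly this reconciliation: the symmetrization requires $X_i$ to be averaged freely, whereas localizing to a drop constrains the neighbouring variables, and the nonzero future (of size $\delta$ rather than $0$) must be dominated uniformly in $n$; balancing these competing demands is what forces the threshold $\delta=\frac{(1-\lambda)^2}{256}\min\{\mu,1\}$ and yields the stated constant $c=\frac1{64}\min\{\mu,1\}\min_{1\le i\le n}\Pr(X_i\le\delta)$.
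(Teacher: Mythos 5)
Your detection lemma is correct (it is essentially Lemma \ref{singleest} of the paper, proved by symmetrization rather than the triangle inequality), and the localized single-index estimate you display is also correct. The gap is that this estimate is genuinely too weak to be summed, and the step you defer to the end --- organizing the detections around drop times plus ``triangle-inequality bookkeeping'' --- is not bookkeeping but the entire content of the theorem, and it is never carried out. Concretely: your error term is $\delta\,\Ex\|G_{i+1}\|$, an \emph{expectation} of the future, and nothing in the hypotheses makes this small. Since all the products have mean one, $\Ex\|G_{i+1}\|$ is typically of order $\sum_{j>i}\|v_j\|$; for instance with $F=\er$ and all $v_j=1$ one has $\Ex\|G_{i+1}\|=n-i$ exactly, so your single-index bound reads $\frac{\mu p}{2}\bigl(1-\delta(n-i)\bigr)$ and is vacuous for every $i\le n-1/\delta$. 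Your proposed remedy --- ``$\sum_i\Ex\sqrt{R_i}\le(1-\lambda)^{-1}$ allows the error terms to be absorbed once $\delta\sim(1-\lambda)^2\min\{\mu,1\}$'' --- is false: the assumption $\Ex\sqrt{X_i}\le\lambda$ yields \emph{tail} control of the future (this is Lemma \ref{sqrt} plus Chebyshev), never control of its expectation, and no bound better than $\Ex\|G_{i+1}\|\le\sum_{j>i}\|v_j\|$ holds in general. With $\delta$ fixed independently of $n$, the errors cannot be absorbed; with $\delta\sim 1/n$ the constant degenerates.

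This is exactly the point where the paper's argument departs from yours. Instead of subtracting the future in expectation, it keeps the future: Lemma \ref{smallY} shows that if the future is small compared to $\|v_0\|$ merely \emph{with probability} $3/4$ (which is what Lemma \ref{sqrt} provides on the drop event $\{X_1\le\ve\}$), then $\Ex\|Y+v_0\|\ge\Ex\|Y\|+\frac18\|v_0\|$ --- an additive gain on top of the full term $\Ex\|Y\|$, with no loss. That retained expectation is then processed by induction (Proposition \ref{indest}): one conditions on $X_1$ only, splits into two cases according to whether $\|v_0\|$ is small or large relative to the weighted tail $\frac{64\ve}{1-\lambda}\sum_k\lambda^{k-1}\|v_k\|$, and applies the induction hypothesis to the coefficients $X_1v_1,\dots,X_1v_{n+1}$; the identity $\Ex\|X_1v_k\|=\|v_k\|$ is the rigorous version of your remark that $\Ex R_{i-1}=1$ lets contributions ``survive undiminished.'' The obstacles you name in your last paragraph --- drop events are not disjoint, conditioning on a drop distorts the laws of neighbouring variables, the future is only approximately zero --- are precisely the ones this induction is engineered to avoid (note that a first-drop-time decomposition would instead produce factors like $(1-\delta)^{i}$, which decay in $i$ and ruin uniformity in $n$). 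Since your proposal stops at naming these obstacles, it is a plan rather than a proof: the analogue of Lemma \ref{smallY} and the inductive scheme of Proposition \ref{indest} are missing, and they are the heart of the matter.
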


Theorem \ref{thm:noniid1} immediately yields the following.

\begin{cor}
\label{iidprod1}
Let $X,X_1,X_2,\ldots$ be an i.i.d. sequence of nonnegative r.v's such that $\Ex X=1$ and $\Pr(X\leq \ve)>0$ for
any $\ve >0$. Then there exists a constant $c$ that depends only on the distribution of $X$ such that
for any $v_0,v_1,\ldots,v_n$ in a normed space $(F,\|\ \|)$,
\[
\Ex\bigg\|\sum_{i=0}^nv_iR_i\bigg\|\geq c\sum_{i=0}^n \|v_i\|.
\]
\end{cor}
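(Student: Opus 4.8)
The plan is to obtain Corollary~\ref{iidprod1} as a direct specialization of Theorem~\ref{thm:noniid1}: I will check that an i.i.d.\ sequence of the stated type satisfies \eqref{ass1}--\eqref{ass2} with constants depending only on the law of $X$, and then verify that the resulting $c$ does not depend on $n$.

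First I would record that $X$ is nondegenerate. Indeed, $\Pr(X\le\ve)>0$ for all $\ve>0$ forces $\Pr(X<1)>0$ (take $\ve=\tfrac12$, say), and since $X\ge 0$ with $\Ex X=1$ this rules out $X\le 1$ a.s.; hence $X$ is not a.s.\ constant. By the remark following \eqref{ass2}, a nondegenerate nonnegative mean-one variable satisfies $\Ex\sqrt X<\sqrt{\Ex X}=1$ (strict Jensen) and $\Ex|X-1|>0$. Because the $X_i$ are identically distributed, \eqref{ass1} holds and \eqref{ass2} holds uniformly in $i$ with $\lambda:=\Ex\sqrt X<1$ and $\mu:=\Ex|X-1|>0$, both determined by the law of $X$ alone.

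Next I would set $\delta:=\frac{(1-\lambda)^2}{256}\min\{\mu,1\}$, a strictly positive number depending only on the law of $X$. The hypothesis $\Pr(X\le\ve)>0$ for every $\ve>0$ applies in particular to this fixed $\delta$, giving $\Pr(X\le\delta)>0$; and by identical distribution $\min_{1\le i\le n}\Pr(X_i\le\delta)=\Pr(X\le\delta)$ for every $n$. Theorem~\ref{thm:noniid1} then applies verbatim and yields the claim with
\[
c=\frac{1}{64}\min\{\mu,1\}\,\Pr(X\le\delta)>0.
\]

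The only point needing care---rather than a genuine obstacle---is that $c$ must not depend on $n$. This is exactly why it matters that $\delta$ is pinned down by $\lambda$ and $\mu$, i.e.\ by the distribution of $X$: the same threshold serves all $n$, so $\Pr(X\le\delta)$ is one fixed positive constant and the bound of Theorem~\ref{thm:noniid1} does not deteriorate as $n\to\infty$. The remaining item to confirm is strict nondegeneracy, ensuring $\lambda<1$ and $\mu>0$, and this follows from $\Pr(X\le\ve)>0$ for all $\ve>0$ together with $\Ex X=1$ as above.
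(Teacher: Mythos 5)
Your proof is correct and is precisely the argument the paper intends: the corollary is stated there as an immediate consequence of Theorem~\ref{thm:noniid1}, obtained by verifying \eqref{ass1}--\eqref{ass2} via nondegeneracy (strict Jensen giving $\lambda<1$, and $\mu>0$) and plugging the distribution-dependent threshold $\delta=\frac{(1-\lambda)^2}{256}\min\{\mu,1\}$ into the hypothesis $\Pr(X\le\ve)>0$. Your additional care about the constant being independent of $n$ is exactly the right point to check, and it holds as you say.
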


\medskip
\noindent
{\bf Example} In the case related to Riesz products,  when $X_1,X_2,\ldots$ are independent with the same distribution as
$1+\cos(Y)$ with $Y$ uniformly distributed on $[0,2\pi]$ we have
\[
\lambda=\Ex\sqrt{1+\cos(Y)}=\sqrt{2}\Ex\Big|\cos\Big(\frac{Y}{2}\Big)\Big|=\frac{2\sqrt{2}}{\pi},\quad
\mu=\Ex|\cos(Y)|=\frac{2}{\pi}
\]
and (since $\cos x\geq 1-x^2/2$) for $0<\ve<1/2$,
\[
\Pr(X_i\leq \ve)=\Pr(\cos(Y)\geq 1-\ve)\geq \frac{\sqrt{2\ve}}{\pi}. 
\]
Thus the constant given by Theorem \ref{thm:noniid1} in this case is 
$c\geq \frac{1}{256}\pi^{-5/2}(1-\frac{2\sqrt{2}}{\pi})\geq 2\cdot 10^{-5}.$

\medskip

To treat the general case we need one more assumption that basically states that the most of the mass of $X_i$'s lies 
in the interval $[0,A]$.

\begin{equation}
\label{ass3}
\Ex|X_i-1|\ind_{\{X_i\geq A\}}\leq \frac{1}{4}\mu \quad \mbox{ for all } i. 
\end{equation}

\begin{thm}
\label{thm:noniid2}
Let $X_1,X_2,\ldots$ satisfy assumptions \eqref{ass1}, \eqref{ass2} and \eqref{ass3}. 
Then for any vectors $v_0,v_1,\ldots,v_n$ in a normed space $(F,\|\ \|)$, we have
\[
\Ex\bigg\|\sum_{i=0}^nv_iR_i\bigg\|\geq \frac{1}{512k}\mu^3\sum_{i=0}^n\|v_i\|,
\] 
where $R_i$ are as in \eqref{defR} and $k$ is a positive integer such that
\begin{equation}
\label{def:k}
\frac{2^{17}}{(1-\lambda)^2}k\lambda^{2k-2}A\leq \mu^3.
\end{equation}
\end{thm}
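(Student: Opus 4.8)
The plan is to bootstrap Theorem~\ref{thm:noniid1} by passing to blocks of length $k$. Group the variables into consecutive blocks and set $\tilde X_j:=\prod_{l=(j-1)k+1}^{jk}X_l$, so that the block partial products $\tilde R_j:=\prod_{m=1}^j\tilde X_m$ coincide with $R_{jk}$. The block variables are again independent, nonnegative and of mean one, and by independence $\Ex\sqrt{\tilde X_j}\le\lambda^k$. The point of raising $\lambda$ to the power $k$ is that, although the individual $X_i$ need not charge neighbourhoods of $0$, the block products do: Markov's inequality gives $\Pr(\tilde X_j>4\lambda^{2k})\le\Ex\sqrt{\tilde X_j}/(2\lambda^k)\le\tfrac12$, whence $\Pr(\tilde X_j\le4\lambda^{2k})\ge\tfrac12$. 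Since \eqref{def:k} forces $\lambda^{2k}$ to be tiny, this also yields $\Ex|\tilde X_j-1|\ge\tfrac14$, so that $(\tilde X_j)$ satisfies \eqref{ass1}--\eqref{ass2} with parameters $\tilde\lambda=\lambda^k$, $\tilde\mu=\tfrac14$ and with smallness probability at least $\tfrac12$. Feeding these into the constant of Theorem~\ref{thm:noniid1} produces exactly the factor $\tfrac1{64}\cdot\tfrac14\cdot\tfrac12=\tfrac1{512}$ appearing in the statement.

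Next I would expand the target sum along this block decomposition. Writing $i=(j-1)k+s$ with $0\le s<k$ and $P^{(j)}_s:=\prod_{l=(j-1)k+1}^{(j-1)k+s}X_l$, one has $R_i=\tilde R_{j-1}P^{(j)}_s$, hence $\sum_i v_iR_i=\sum_j\tilde R_{j-1}U_j$ with $U_j:=\sum_{s=0}^{k-1}v_{(j-1)k+s}P^{(j)}_s$. Crucially $U_j$ depends only on variables of block $j$, so $U_j$ is independent of $\tilde R_{j-1}$. Thus the sum has the shape to which the block version of Theorem~\ref{thm:noniid1} applies, now with the random vector coefficients $U_j$ in place of the $v_i$; this step is meant to transfer the smallness of the block products into an estimate of the form $\Ex\|\sum_i v_iR_i\|\gtrsim\tfrac1{512}\sum_j\Ex\|U_j\|$ and thereby reduce the theorem to a purely \emph{within-block} statement.

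The remaining within-block task is to bound $\Ex\|U_j\|$ from below by $\tfrac{c\mu^3}{k}\sum_{s}\|v_{(j-1)k+s}\|$. Here smallness is unavailable (the individual $X_l$ may be bounded away from $0$), so I would instead run a soft argument using only $\Ex|X_l-1|\ge\mu$ together with the truncation supplied by \eqref{ass3}: replacing each $X_l$ by $X_l\ind_{\{X_l<A\}}$ changes the relevant first moments by at most $\tfrac14\mu$, after which the variables are bounded by $A$. For bounded mean-one variables one can extract a single within-block coefficient at the cost of a fixed power of $\mu$ for each of the few comparison steps, which is the origin of the factor $\mu^3$; discarding all but one representative per block accounts for the factor $\tfrac1k$. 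Condition \eqref{def:k} is precisely what guarantees that the cumulative truncation error over a block, governed by $k\lambda^{2k-2}A$, stays below the gain $\mu^3$, so that the truncated and original sums remain comparable.

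I expect the main obstacle to be the coupling between the two scales: the block coefficient $U_j$ and the block increment $\tilde X_j$ are built from the same block and hence are dependent, so conditioning on the event $\{\tilde X_j\text{ small}\}$ that one uses to kill the tail of the block sum simultaneously biases $U_j$. Reconciling ``products must be long in order to be small'' with ``coefficients must be independent of the multipliers'' is the heart of the matter, and it is exactly here that assumptions \eqref{ass3} and \eqref{def:k} are spent: truncation at level $A$ confines the bias, and the smallness of $k\lambda^{2k-2}A$ renders the residual dependence negligible against the extracted $\tfrac{\mu^3}{k}$.
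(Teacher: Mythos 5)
Your high-level ingredients are the same ones the paper uses (a product of $k$ consecutive variables is small with probability at least $\tfrac12$, by Chebyshev applied to $\Ex\sqrt{R_{2,k}}\le\lambda^{k-1}$; truncation at level $A$ via \eqref{ass3}; a within-block bound costing a power of $\mu$ and a factor $\tfrac1k$), but the central step of your plan --- the transfer estimate $\Ex\|\sum_iv_iR_i\|\gtrsim\tfrac1{512}\sum_j\Ex\|U_j\|$ obtained by applying a ``block version'' of Theorem \ref{thm:noniid1} with the random coefficients $U_j$ --- is not justified, and it is exactly where the whole difficulty of the theorem sits. Theorem \ref{thm:noniid1} (and its engine, Proposition \ref{indest}) requires coefficients that are deterministic, or at least independent of the multipliers, whereas your $U_j$ shares the variables $X_{(j-1)k+1},\dots,X_{jk-1}$ with $\tilde X_j$. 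The proof of Theorem \ref{thm:noniid1} extracts the leading coefficient by conditioning on the event that the first multiplier is small, and this conditioning is harmless only because that coefficient is fixed. In your decomposition, conditioning on $\{\tilde X_j\mbox{ small}\}$ biases $U_j$, and in the harmful direction: with $k=2$ and $v_{2j-2}=0$ one has $U_j=v_{2j-1}X_{2j-1}$ and $\tilde X_j=X_{2j-1}X_{2j}$, so on the event that $\tilde X_j$ is small, $\|U_j\|$ is typically small too, and the conditional expectation of $\|U_j\|$ can be far below $\Ex\|U_j\|$. So no estimate of the claimed form can come out of a black-box block application of Theorem \ref{thm:noniid1}; indeed the transfer inequality, with the constant depending on $\mu$ and $k$, is essentially equivalent to Theorem \ref{thm:noniid2} itself, so your reduction is circular. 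You flag this dependence yourself as ``the heart of the matter,'' but the closing claim that \eqref{ass3} and \eqref{def:k} ``render the residual dependence negligible'' is not an argument: \eqref{def:k} controls the size of $R_k$ on the good event, not the conditional bias of $U_j$.

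For comparison, the paper's Proposition \ref{prop:indest2} resolves precisely this point by never trying to extract a whole block expectation $\Ex\|U_j\|$ under the conditioning. On the event $A_k=\{X_1\le A,\ R_{2,k}\le4\lambda^{2k-2}\}$ only the single deterministic coefficient $\|v_0\|$ is extracted, and that extraction survives the conditioning because Lemma \ref{lem:tech1} gives $\Ex\|uX_1+v_0\|\ind_{\{X_1\le A\}}\ge\tfrac18\mu\|v_0\|$ \emph{uniformly in} $u$, hence uniformly over the shared variables $X_2,\dots,X_{k-1}$ on which one conditions; the tail is then decoupled by Lemma \ref{lem:tech2}. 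The within-block coefficients $v_1,\dots,v_k$ are recovered from a separate, unconditional term $\alpha\Ex\|\sum_{i=0}^kv_iR_i\|$ via Lemma \ref{lem:tech3} --- which, incidentally, needs only $\Ex|X_i-1|\ge\mu$ and no truncation, so your within-block step is also miscalibrated: \eqref{ass3} is spent on the head extraction under conditioning, not inside the block. Supplying a replacement for your transfer step amounts to proving Proposition \ref{prop:indest2}, i.e.\ essentially the paper's entire proof, so as it stands the proposal has a genuine gap.
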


Since in the i.i.d. case all assumptions are clearly satisfied we get the positive answer to Wojciechowski's question.

\begin{thm}
\label{thm:iidprod2}
Let $X,X_1,X_2,\ldots$ be an i.i.d. sequence of nonnegative nondegenerate r.v's such that $\Ex X=1$. 
Then there exists a constant $c$ that depends only on the distribution of $X$ such that
for any $v_0,v_1,\ldots,v_n$ in a normed space $(F,\|\ \|)$,
\[
\Ex\bigg\|\sum_{i=0}^nv_iR_i\bigg\|\geq c\sum_{i=0}^n \|v_i\|.
\]
\end{thm}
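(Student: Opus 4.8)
The plan is to deduce Theorem \ref{thm:iidprod2} directly from Theorem \ref{thm:noniid2} by verifying that an i.i.d.\ nondegenerate mean-one sequence satisfies all of the hypotheses \eqref{ass1}, \eqref{ass2}, \eqref{ass3} with constants depending only on the law of $X$, and that one can choose a finite integer $k$ obeying \eqref{def:k}. Since $X_1,X_2,\ldots$ all share the distribution of $X$, every ``for all $i$'' condition collapses to a single statement about $X$, so uniformity in $i$ is automatic. I would appeal to the general Theorem \ref{thm:noniid2} rather than to Corollary \ref{iidprod1}, because a generic nondegenerate $X$ need not put mass near $0$ (e.g.\ it could be supported in $[1/2,3/2]$), so the hypothesis $\Pr(X\leq\ve)>0$ of the corollary may fail; assumption \eqref{ass3} together with the choice of $k$ is exactly what replaces the ``mass near zero'' condition.

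First I would fix the parameters. Assumption \eqref{ass1} is immediate. For \eqref{ass2} set $\lambda:=\Ex\sqrt{X}$ and $\mu:=\Ex|X-1|$. Because $\sqrt{\cdot}$ is strictly concave and $X$ is nondegenerate, Jensen's inequality is strict, giving $\lambda=\Ex\sqrt{X}<\sqrt{\Ex X}=1$; and nondegeneracy together with $\Ex X=1$ forces $\mu=\Ex|X-1|>0$ (otherwise $X=1$ a.s., contradicting nondegeneracy). Both $\lambda$ and $\mu$ are determined by the law of $X$ alone, as required.

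Next I would secure \eqref{ass3} and a valid $k$. Note that $\Ex|X-1|\leq\Ex X+1=2<\infty$, so by dominated convergence $\Ex|X-1|\ind_{\{X\geq A\}}\to 0$ as $A\to\infty$; hence some finite $A=A(X)$ makes the left-hand side of \eqref{ass3} at most $\frac14\mu$. With $\lambda<1$ now fixed, the quantity $k\lambda^{2k-2}$ tends to $0$ as $k\to\infty$ (the exponential factor beats the linear one), so the left side of \eqref{def:k} can be driven below $\mu^3$; I would take $k=k(X)$ to be the smallest integer for which \eqref{def:k} holds.

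With these choices in place Theorem \ref{thm:noniid2} applies verbatim and yields the desired bound with constant $c=\mu^3/(512k)$, which depends only on the distribution of $X$, completing the proof. There is essentially no hard step here, since the whole analytic content was already carried by Theorems \ref{thm:noniid1} and \ref{thm:noniid2}; the only points deserving a line of care are the strictness of Jensen's inequality (which is precisely where nondegeneracy of $X$ enters) and the finiteness of $k$ forced by the exponential term in \eqref{def:k}. If anything counts as an obstacle, it is merely bookkeeping: checking that every constant we select is genuinely a function of the law of $X$ alone, which the i.i.d.\ assumption guarantees.
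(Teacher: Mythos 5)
Your proposal is correct and follows exactly the route the paper takes: the paper disposes of this theorem with the single remark that ``in the i.i.d.\ case all assumptions are clearly satisfied,'' i.e.\ it deduces the result from Theorem \ref{thm:noniid2} just as you do. Your write-up simply makes explicit the details the paper leaves implicit (strict Jensen for $\lambda<1$, dominated convergence for the choice of $A$ in \eqref{ass3}, and finiteness of $k$ in \eqref{def:k}), all of which are verified correctly.
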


In the symmetric case the similar estimate follows by conditioning.

\begin{cor}
Let $X,X_1,X_2,\ldots$ be an i.i.d. sequence of symmetric r.v's such that $\Ex|X|=1$ and $\Pr(|X|=1)<1$. 
Then there exists a constant $c$ that depends only on the distribution of $X$ such that
for any $v_0,v_1,\ldots,v_n$ in a normed space $(F,\|\ \|)$,
\[
\Ex\bigg\|\sum_{i=0}^nv_iR_i\bigg\|\geq c\sum_{i=0}^n \|v_i\|.
\]
\end{cor}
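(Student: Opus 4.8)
The plan is to reduce the symmetric case to the nonnegative case already settled in Theorem~\ref{thm:iidprod2}, via conditioning on signs, exactly as the sentence preceding the statement suggests. Since each $X_i$ is symmetric, I would first record the standard decomposition $X_i=\ve_iY_i$, where $Y_i:=|X_i|$ and $\ve_i:=\sgn(X_i)$, with $\ve_i$ taken to be an independent fair sign on the event $\{X_i=0\}$ (which does not affect the product). Symmetry of $X_i$ guarantees that $\ve_i$ is a Rademacher variable independent of $Y_i$; performing this coordinatewise and using that the $X_i$ are i.i.d., I obtain that $(\ve_1,\ldots,\ve_n)$ are i.i.d.\ signs, $(Y_1,\ldots,Y_n)$ are i.i.d.\ copies of $|X|$, and the two families are independent.

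The key observation is that the absolute values $Y_i$ satisfy the hypotheses of Theorem~\ref{thm:iidprod2}: they are nonnegative with $\Ex Y_i=\Ex|X|=1$, and they are nondegenerate, because a degenerate $|X|$ of mean one would force $\Pr(|X|=1)=1$, contradicting the assumption $\Pr(|X|=1)<1$. Writing $\eta_i:=\ve_1\cdots\ve_i$ (with $\eta_0:=1$) and $S_i:=\prod_{j=1}^iY_j$ (with $S_0:=1$), I factor the product as $R_i=\prod_{j=1}^iX_j=\eta_iS_i$, so that $\sum_{i=0}^nv_iR_i=\sum_{i=0}^n(\eta_iv_i)S_i$.

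Now I would condition on the signs $\ve_1,\ldots,\ve_n$. Given these, the scalars $\eta_i\in\{-1,+1\}$ are fixed, so $u_i:=\eta_iv_i$ are deterministic vectors in $F$ with $\|u_i\|=\|v_i\|$, while the $Y_i$ remain i.i.d.\ nonnegative, nondegenerate, mean one, and independent of the conditioning. Applying Theorem~\ref{thm:iidprod2} to the sequence $(Y_i)$ with coefficients $(u_i)$, and denoting by $\Ex_Y$ the expectation in the $Y$-variables, yields
\[
\Ex_Y\bigg\|\sum_{i=0}^nv_iR_i\bigg\|
=\Ex_Y\bigg\|\sum_{i=0}^nu_iS_i\bigg\|
\geq c\sum_{i=0}^n\|u_i\|
=c\sum_{i=0}^n\|v_i\|,
\]
where $c$ is the constant from Theorem~\ref{thm:iidprod2} for the law of $|X|$. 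Taking expectation over the signs gives the claim, and $c$ depends only on the distribution of $X$.

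As for the main obstacle: there is no genuine analytic difficulty here, since all the substantive work has been carried out in Theorem~\ref{thm:iidprod2}. The single point requiring care is the sign/absolute-value decomposition---verifying that symmetry really does make $\sgn(X_i)$ a Rademacher variable independent of $|X_i|$, and that conditioning on the signs leaves the $Y_i$ with their correct (unchanged) joint law. Once that independence is established cleanly, the conditioning argument is immediate.
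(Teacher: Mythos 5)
Your proposal is correct and follows essentially the same route as the paper: both factor each $X_i$ into a Rademacher sign times $|X_i|$, condition on the signs, and apply Theorem~\ref{thm:iidprod2} to the i.i.d.\ nonnegative mean-one sequence $(|X_i|)$ with the sign-twisted coefficients, using $\|v_i\prod_k\ve_k\|=\|v_i\|$. The only (immaterial) difference is that the paper introduces fresh independent signs and uses the distributional identity $(X_i)\stackrel{d}{=}(\ve_i|X_i|)$, whereas you extract $\ve_i=\sgn(X_i)$ and verify its independence from $|X_i|$ directly.
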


\begin{proof}
Let $(\ve_i)$ be a sequence of independent symmetric $\pm 1$ r.v's independent of $(X_i)$. Then by
Theorem \ref{thm:iidprod2}
\begin{align*}
\Ex\bigg\|\sum_{i=0}^nv_iR_i\bigg\|
&=
\Ex_{\ve}\Ex_X\bigg\|v_0+\sum_{i=1}^nv_i\prod_{k=1}^i\ve_k\prod_{k=1}^i|X_k|\bigg\|
\\
&\geq \Ex_{\ve}c\Big(\|v_0\|+\sum_{i=1}^n\Big\|v_i\prod_{k=1}^i\ve_k\Big\|\Big)
=c\sum_{i=0}^n \|v_i\|.
\end{align*}
\end{proof}

\medskip

\noindent
{\bf Example.} Assumption $\Pr(|X|=1)<1$ is crucial since
\[
\Ex\Big|\sum_{i=1}^n\prod_{k=1}^i\ve_k\Big|=\Ex\Big|\sum_{i=1}^n\ve_i\Big|\leq
\Big(\Ex\Big|\sum_{i=1}^n\ve_i\Big|^2\Big)^{1/2}=n^{1/2}.
\]

\medskip

Let  $(n_k)_{k\geq 1}$ be an increasing sequence of positive integers such that $n_{k+1}/n_k\geq 3$. 
Riesz products are defined by
\begin{equation}
\bar{R}_i(t)=\prod_{j=1}^i(1+\cos(n_jt)),\quad i=1,2,\ldots.
\end{equation}
It is well known that if $n_k$ grow sufficiently fast then 
$\|\sum_{i=0}^na_i\bar{R}_i\|_{L_1}\sim \Ex|\sum_{i=0}^na_iR_i|$, where $R_i$ are products of
independent random variables distributed as $\bar{R}_1$.
Here is the more quantitative result.  

\begin{cor}
Suppose that $(n_k)_{k\geq 1}$ is an increasing sequence of positive integers such that $n_{k+1}/n_k\geq 3$ and
$\sum_{k=1}^{\infty}\frac{n_k}{n_{k+1}}<\infty$.
Then for any coefficients $a_0,a_1,\ldots,a_n$,
\begin{equation}
\label{L1Riesz}
c\sum_{i=0}^n|a_i|\leq \frac{1}{2\pi}\int_0^{2\pi}\Big|\sum_{i=0}^na_i\bar{R}_i(t)\Big|dt\leq \sum_{i=0}^n|a_i|,
\end{equation}
where $c>0$ is a positive constant that depends only on the sequence $(n_k)$. 
\end{cor}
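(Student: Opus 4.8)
The plan is to prove the trivial upper bound directly and to derive the lower bound by transferring the already-proved i.i.d.\ estimate, applied to $X=1+\cos Y$, from products of independent variables to the genuine Riesz products; the bridge is a quantitative version of the principle that, for lacunary frequencies, the factors $1+\cos(n_jt)$ behave like independent copies of $1+\cos Y$. To set this up I would realise both quantities on the torus $\mathbb{T}^n$. Writing $G(y_1,\dots,y_n)=\sum_{i=0}^n a_i\prod_{j=1}^i(1+\cos y_j)$, letting $\nu$ be the uniform product measure and $\mu$ the image of $\frac{dt}{2\pi}$ under $t\mapsto(n_1t,\dots,n_nt)$, one has $\frac{1}{2\pi}\int_0^{2\pi}|\sum_i a_i\bar R_i(t)|\,dt=\int_{\mathbb{T}^n}|G|\,d\mu$, while for $X_j=1+\cos Y_j$ we have $\Ex|\sum_i a_iR_i|=\int_{\mathbb{T}^n}|G|\,d\nu$. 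The upper bound is then immediate: each $\bar R_i\ge0$, and since $n_{j+1}/n_j\ge3$ forces every frequency $\sum_j\ve_j n_j$ with $\ve_j\in\{-1,0,1\}$ and $\ve\ne0$ to be nonzero, one gets $\frac{1}{2\pi}\int_0^{2\pi}\bar R_i\,dt=1$, whence $\int|G|\,d\mu\le\sum_i|a_i|$.

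For the lower bound the exact model to keep in mind is the second moment. Since $G$ has degree one in each variable, $G^2$ involves only frequencies $\sum_j\ve_jn_j$ with $\ve_j\in\{-2,\dots,2\}$, and the ratio condition $n_{j+1}/n_j\ge3$ makes $\ve\mapsto\sum_j\ve_jn_j$ injective on $\{-2,\dots,2\}^n$; hence $\mu$ and $\nu$ integrate every such character identically and $\int G^2\,d\mu=\int G^2\,d\nu$. Thus the two models are isometric in $L_2$, the clean ``no resonance'' statement that I want to upgrade to $L_1$.

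The passage to $L_1$ is the heart of the matter, and here I would telescope scale by scale, replacing the Riesz phases by independent ones from the fastest frequency downwards. Fixing the slower phases $n_1t,\dots,n_{k-1}t$, write $G=A_k+(1+\cos y_k)C_k$ with $A_k,C_k$ independent of $y_k$; then $y_k\mapsto|G|$ is Lipschitz with constant at most $|C_k|$. By $n_k\ge3n_{k-1}$ the conditional law of the fastest remaining phase $n_kt$ lies within Wasserstein distance of order $n_{k-1}/n_k$ of the uniform law, so replacing it by an independent uniform phase changes the integral by at most a constant times $(n_{k-1}/n_k)\int|C_k|$. The hard part will be to turn this into a \emph{multiplicative} estimate $\int|G|\,d\mu\ge\prod_k\big(1-C\,n_{k-1}/n_k\big)\int|G|\,d\nu$, i.e.\ to bound the perturbation $\int|C_k|$ at each scale by the \emph{current} $L_1$ norm rather than by $\sum_i|a_i|$; this self-referential control of the error, exploiting the affine structure and the conditional near-uniformity, is where essentially all the work lies, and it is exactly here that $\sum_k n_k/n_{k+1}<\infty$ is used, since it is equivalent to the positivity of $\prod_k(1-C\,n_{k-1}/n_k)$ and hence produces a constant $c((n_k))>0$.

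Granting this multiplicative comparison, it remains to apply the i.i.d.\ estimate. The variable $X=1+\cos Y$ is nonnegative, nondegenerate, has mean one and, as computed in the Example, satisfies $\Pr(X\le\ve)>0$ for every $\ve>0$; thus Corollary~\ref{iidprod1} gives $\int|G|\,d\nu=\Ex|\sum_i a_iR_i|\ge c_0\sum_i|a_i|$ for an absolute $c_0>0$. Combining, $\frac{1}{2\pi}\int_0^{2\pi}|\sum_i a_i\bar R_i|\,dt=\int|G|\,d\mu\ge c_0\prod_k(1-C\,n_{k-1}/n_k)\sum_i|a_i|$, which is the asserted lower bound in \eqref{L1Riesz} with a constant depending only on the sequence $(n_k)$.
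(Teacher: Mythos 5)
Your architecture coincides with the paper's: the upper bound is the trivial one ($\bar R_i\ge 0$ and $\frac{1}{2\pi}\int_0^{2\pi}\bar R_i\,dt=1$), and the lower bound is to be obtained by comparing $\frac{1}{2\pi}\int_0^{2\pi}|\sum_i a_i\bar R_i(t)|\,dt$ with $\Ex|\sum_i a_iR_i|$ for independent copies of $1+\cos Y$ and then invoking Corollary~\ref{iidprod1}. The difference is that the paper obtains the comparison $\frac{1}{2\pi}\int_0^{2\pi}|\sum_i a_i\bar R_i(t)|\,dt\ge c'\,\Ex|\sum_i a_iR_i|$ by citing a theorem of Meyer \cite{YM}, valid exactly under the hypothesis $\sum_k n_k/n_{k+1}<\infty$; this citation \emph{is} the entire content of the lower bound. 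You neither quote this result nor prove it: your telescoping scheme is a plan, and you explicitly concede that its decisive step (the ``multiplicative'' or ``self-referential'' control of the error) is left open. So the proposal has a genuine gap at the one step that is not routine. Note also the circularity you yourself flag is real: to make the loss at scale $k$ multiplicative you must bound $\int|C_k|$ by the $L_1$ norm of $G$ under the \emph{hybrid} (partly Riesz, partly independent) measure, and the only lower bound available (Corollary~\ref{iidprod1}) applies to the fully independent measure, not to the hybrids.

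Moreover, the one concrete probabilistic ingredient you do offer is false. You claim that, conditionally on the slower phases $n_1t,\dots,n_{k-1}t$, the law of $n_kt$ is within Wasserstein distance $O(n_{k-1}/n_k)$ of uniform. Take $n_k=2^{k^2}$, which satisfies both hypotheses ($n_{k+1}/n_k=2^{2k+1}\ge 3$ and $\sum_k n_k/n_{k+1}=\sum_k 2^{-(2k+1)}<\infty$). Then $n_{k-1}$ divides $n_k$, so $n_kt\equiv 2^{2k-1}(n_{k-1}t)\ \mathrm{mod}\ 2\pi$ is a \emph{deterministic} function of $n_{k-1}t$: the conditional law is a Dirac mass, at Wasserstein distance of order $1$ from uniform, not $O(n_{k-1}/n_k)$. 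A joint-law version can be salvaged (the image of the uniform measure under $t\mapsto(n_1t,\dots,n_kt)$ is within transport distance $O(n_{k-1}/n_k)$ of the product of the $(k-1)$-dimensional image measure with an independent uniform phase, by moving $t$ by at most $\pi/n_k$), but feeding this into your telescoping in the obvious way yields only \emph{additive} errors, giving a lower bound of the form $\big(c_0-C\sum_k n_k/n_{k+1}\big)\sum_i|a_i|$, which is useful only when the series is sufficiently small, not merely finite. Closing that gap is precisely the nontrivial content of Meyer's theorem; the honest fix is either to cite \cite{YM}, as the paper does, or to supply a genuine proof of the transference inequality.
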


\begin{proof}
We have $\bar{R}_i\geq 0$, so $\|\bar{R}_i\|_{L_1}=1$ and the upper estimate is obvious. To show the opposite bound
let $X_1,X_2,\ldots$ be independent random variables distributes as $1+\cos(Y)$, where $Y$ is uniformly
distributed on $[0,2\pi]$ and $R_i$ be as in \eqref{defR}. By the result of Y.~Meyer \cite{YM},
$\|\sum_{i=0}^na_i\bar{R}_i\|_{L_1}\geq c'\Ex|\sum_{i=0}^na_iR_i|$ and the lower estimate follows 
by Corollary \ref{iidprod1}.
\end{proof}

The condition $\sum_{k=1}^{\infty}\frac{n_k}{n_{k+1}}<\infty$ may be weakened to $\sum_{k=1}^{\infty}\frac{n_k^2}{n_{k+1}^2}<\infty$ \cite{MD},
we do not however know whether lower estimate holds under more general assumptions.
\medskip

\noindent
{\bf Problem.} Does the estimate \eqref{L1Riesz} holds for all sequences of integers such that $n_{k+1}/n_k\geq 3$?

\section{Proof of Theorem \ref{thm:noniid1}}

In this section $(F,\|\ \|)$ denotes a normed space. To avoid the measurability questions we assume that $F$ is finite
dimensional, in particular it is separable.
 First we show few simple estimates.

\begin{lem}
\label{singleest}
Suppose that $X$ is a nonnegative r.v. and $\Ex X=1$. Then for any $u,v\in F$ we have
\[
\Ex\|uX+v\|\geq \frac{1}{2}\Ex|X-1|\max\{\|u\|,\|v\|\}.
\]
\end{lem}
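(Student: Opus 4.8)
The plan is to reduce the vector inequality to two scalar estimates by testing against norming functionals, proving separately that $\Ex\|uX+v\|$ dominates $\tfrac12\Ex|X-1|\,\|u\|$ and $\tfrac12\Ex|X-1|\,\|v\|$; since each bound implies the conclusion, taking the better of the two recovers the $\max$. Throughout I may assume $u,v\neq0$, the degenerate cases being trivial, and I will use that $F$ is finite dimensional so that Hahn--Banach supplies the needed functionals.

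For the $\|u\|$-estimate, pick $\phi\in F^*$ with $\|\phi\|\le1$ and $\phi(u)=\|u\|$. Then
\[
\Ex\|uX+v\|\ge\Ex|\phi(u)X+\phi(v)|=\|u\|\;\Ex\Big|X+\tfrac{\phi(v)}{\|u\|}\Big|\ge\|u\|\inf_{s\in\er}\Ex|X-s|,
\]
so it suffices to show $\inf_{s}\Ex|X-s|\ge\tfrac12\Ex|X-1|$. The infimum is attained at a median $m$ of $X$, and the triangle inequality gives $|m-1|=|\Ex(X-m)|\le\Ex|X-m|$, hence $\Ex|X-1|\le\Ex|X-m|+|m-1|\le2\Ex|X-m|$, which is exactly what is needed. (Alternatively one symmetrizes with an independent copy $X'$: from $\|uX+v\|+\|uX'+v\|\ge\|u(X-X')\|$ one gets $2\Ex\|uX+v\|\ge\|u\|\,\Ex|X-X'|\ge\|u\|\,\Ex|X-1|$ by conditional Jensen.)

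For the $\|v\|$-estimate, choose $\psi$ with $\|\psi\|\le1$ and $\psi(v)=\|v\|$; then $\Ex\|uX+v\|\ge\Ex|aX+b|$ with $a=\psi(u)\in\er$ and $b=\|v\|\ge0$, and I must show $\Ex|aX+b|\ge\tfrac12\Ex|X-1|\,b$. If $a\ge0$ this is immediate, since $aX+b\ge0$ forces $\Ex|aX+b|=a+b\ge b\ge\tfrac12\Ex|X-1|\,b$ (using $\Ex|X-1|\le\Ex X+1=2$). If $a<0$, writing $a=-s$ with $s>0$ and $t=b/s$ reduces everything to the single scalar inequality
\[
\Ex|X-t|\ge\tfrac t2\Ex|X-1|,\qquad t>0 .
\]

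The hard part is this last inequality, and the obstacle is the sharp constant: symmetrization and the Jensen bound $\Ex\|uX+v\|\ge\|u+v\|$ only combine to give $\tfrac14\|v\|$, so the factor $\tfrac12$ genuinely requires the scalar statement. I would prove it by splitting at $t=1$ and exploiting $\tfrac12\Ex|X-1|=\Ex(X-1)_+=\Ex(1-X)_+=\int_0^1\Pr(X\le x)\,dx$, which holds because $\Ex(X-1)=0$. For $0<t\le1$ one has $\Ex|X-t|\ge\Ex(X-t)_+\ge\Ex(X-1)_+=\tfrac12\Ex|X-1|\ge\tfrac t2\Ex|X-1|$. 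For $t\ge1$ one instead keeps the other part, $\Ex|X-t|\ge\Ex(t-X)_+=\int_0^t\Pr(X\le x)\,dx$, and uses monotonicity of $x\mapsto\Pr(X\le x)$: since $\Pr(X\le x)\ge\Pr(X\le1)$ for $x\ge1$ and $\Pr(X\le1)\ge\int_0^1\Pr(X\le x)\,dx=\tfrac12\Ex|X-1|$, one gets $\int_0^t\Pr(X\le x)\,dx\ge\int_0^1\Pr(X\le x)\,dx+(t-1)\Pr(X\le1)\ge\tfrac t2\Ex|X-1|$. Assembling the two functional bounds then yields $\Ex\|uX+v\|\ge\tfrac12\Ex|X-1|\max\{\|u\|,\|v\|\}$, as claimed.
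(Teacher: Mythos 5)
Your proof is correct, but it takes a genuinely different route from the paper's. You dualize with norming functionals and reduce everything to two scalar inequalities: $\Ex|X-s|\ge\frac12\Ex|X-1|$ for all real $s$ (giving the $\|u\|$ bound) and $\Ex|X-t|\ge\frac{t}{2}\Ex|X-1|$ for $t>0$ (giving the $\|v\|$ bound), the latter proved by a layer-cake argument based on $\frac12\Ex|X-1|=\int_0^1\Pr(X\le x)\,dx$. All steps check out; the median detour is even unnecessary, since $|s-1|=|\Ex(X-s)|\le\Ex|X-s|$ already gives the first scalar inequality for every $s$. The paper instead never leaves the normed space: it writes $uX+v=u(X-1)+(u+v)$ and $uX+v=v(1-X)+(u+v)X$, applies the triangle inequality to each decomposition, and absorbs the error term $\|u+v\|$ via Jensen's bound $\|u+v\|=\|u\Ex X+v\|\le\Ex\|uX+v\|$, which yields $2\Ex\|uX+v\|\ge\|u\|\Ex|X-1|$ and $2\Ex\|uX+v\|\ge\|v\|\Ex|X-1|$ in two lines each. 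The paper's argument is shorter, requires no dual functionals (hence no appeal to Hahn--Banach or finite dimensionality), and---contrary to your remark that the constant $\frac12$ in the $\|v\|$ bound ``genuinely requires the scalar statement''---it attains $\frac12$ purely by the triangle inequality, using the factor $X$ in front of $(u+v)$ so that the error term has expectation $\|u+v\|\Ex X=\|u+v\|$. What your route buys is the sharper scalar fact $\Ex|X-t|\ge\frac{t}{2}\Ex|X-1|$, which is of some independent interest, but it is more machinery than the lemma needs.
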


\begin{proof}
We have $\Ex\|uX+v\|\geq \|u\Ex X+v\|=\|u+v\|$. Moreover, 
\begin{align*}
\Ex\|uX+v\|&=\Ex\|u(X-1)+(u+v)\|\geq \|u\|\Ex|X-1|-\|u+v\|
\\
&\geq \|u\|\Ex|X-1|-\Ex\|uX+v\|
\end{align*}
and
\begin{align*}
\Ex\|uX+v\|&=\Ex\|v(1-X)+(u+v)X\|\geq \|v\|\Ex|X-1|-\|u+v\|\Ex|X|
\\
&\geq \|v\|\Ex|X-1|-\Ex\|uX+v\|.
\end{align*}
\end{proof}

\begin{lem}
\label{smallY}
Let $v\in F$ and $Y$ be a random vector with values in $F$ such that $\Pr(\|Y\|> \frac{\|v\|}{4})\leq 1/4$. Then
$\Ex\|Y+v\|\geq \Ex\|Y\|+\frac{\|v\|}{8}$.
\end{lem}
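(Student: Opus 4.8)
The plan is to split the expectation according to whether $Y$ is small relative to $v$. I would set $A := \{\|Y\| \leq \frac{\|v\|}{4}\}$, so that the hypothesis reads $\Pr(A^c) \leq \frac14$, and write
$$\Ex\|Y+v\| - \Ex\|Y\| = \Ex\big[(\|Y+v\|-\|Y\|)\ind_A\big] + \Ex\big[(\|Y+v\|-\|Y\|)\ind_{A^c}\big],$$
bounding the two integrands separately using nothing more than the triangle inequality.

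On the favorable event $A$, the reverse triangle inequality gives $\|Y+v\| \geq \|v\| - \|Y\|$, and hence $\|Y+v\| - \|Y\| \geq \|v\| - 2\|Y\| \geq \frac{\|v\|}{2}$, where the last step uses the defining bound $\|Y\| \leq \frac{\|v\|}{4}$ valid on $A$. On the complementary event I would use only the crude estimate $\|Y+v\| - \|Y\| \geq -\|v\|$, again from the reverse triangle inequality.

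Combining these pointwise bounds with $\Pr(A) \geq \frac34$ and $\Pr(A^c) \leq \frac14$ yields
\begin{align*}
\Ex\|Y+v\| - \Ex\|Y\| &\geq \frac{\|v\|}{2}\Pr(A) - \|v\|\Pr(A^c) = \frac{\|v\|}{2} - \frac{3\|v\|}{2}\Pr(A^c) \\
&\geq \frac{\|v\|}{2} - \frac{3\|v\|}{8} = \frac{\|v\|}{8},
\end{align*}
which is exactly the asserted inequality. I do not expect any genuine obstacle here: the whole argument is a single case split, and the only thing to watch is the arithmetic bookkeeping, namely that the gain $\frac{\|v\|}{2}$ on the event $A$, carrying probability at least $\frac34$, comfortably outweighs the maximal loss $\|v\|$ on $A^c$, carrying probability at most $\frac14$, so that the constants collapse to precisely $\frac18$.
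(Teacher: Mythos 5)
Your proof is correct and is essentially the paper's own argument: both split on the event $\{\|Y\|\leq\|v\|/4\}$, use the reverse triangle inequality to gain at least $\|v\|/2$ there and lose at most $\|v\|$ on the complement, and conclude via the same arithmetic $\frac{1}{2}\Pr(A)-\Pr(A^c)\geq\frac12-\frac32\cdot\frac14=\frac18$. The only cosmetic difference is that the paper bounds $\Ex\|Y+v\|$ directly from below rather than subtracting $\Ex\|Y\|$, which sidesteps the (harmless) implicit assumption that $\Ex\|Y\|$ is finite.
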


\begin{proof}
We have by the triangle inequality
\begin{align*}
\Ex\|Y+v\|&\geq \Ex(\|Y\|-\|v\|)\ind_{\{\|Y\|>\|v\|/4\}}+\Ex\Big(\|Y\|+\frac{\|v\|}{2}\Big)\ind_{\{\|Y\|\leq \|v\|/4\}}
\\
&=\Ex\|Y\|+\|v\|\Big(\frac{1}{2}\Pr\Big(\|Y\|\leq \frac{\|v\|}{4}\Big)-\Pr\Big(\|Y\|> \frac{\|v\|}{4}\Big)\Big)
\\
&\geq \Ex\|Y\|+\frac{\|v\|}{8}.
\end{align*}
\end{proof}

\begin{lem}
\label{sqrt}
Suppose that $X_i$ are independent nonnegative r.v's such that $\Ex\sqrt{X_i}\leq \lambda<1$ for all $i$.  
Then for any $v_0,\ldots,v_n\in F$,
\begin{equation}
\label{eq:sqrt1}
\Ex\bigg\|\sum_{k=0}^n v_kR_k\bigg\|^{1/2}\leq \sum_{k=0}^n \lambda^k\|v_k\|^{1/2}.
\end{equation}
and
\begin{equation}
\label{eq:sqrt2}
\Pr\bigg(\bigg\|\sum_{k=0}^n v_kR_k\bigg\|\geq \frac{t}{1-\lambda}\sum_{k=0}^n \lambda^k\|v_k\|\bigg)
\leq \frac{1}{\sqrt{t}}\quad
\mbox{ for }t\geq 1.
\end{equation}
\end{lem}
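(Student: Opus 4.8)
The plan is to derive both estimates from the subadditivity of $t\mapsto\sqrt{t}$ together with the independence of the factors $X_j$, and then convert the resulting moment bound into a tail bound by Markov's inequality. First I would prove \eqref{eq:sqrt1}. Since each $R_k\geq 0$, the triangle inequality gives $\|\sum_{k=0}^n v_kR_k\|\leq\sum_{k=0}^n\|v_k\|R_k$. Because $\sqrt{\cdot}$ is concave with $\sqrt{0}=0$, hence subadditive on $[0,\infty)$, applying it yields
\[
\bigg\|\sum_{k=0}^n v_kR_k\bigg\|^{1/2}\leq\bigg(\sum_{k=0}^n\|v_k\|R_k\bigg)^{1/2}\leq\sum_{k=0}^n\|v_k\|^{1/2}R_k^{1/2}.
\]
Taking expectations and using that $R_k^{1/2}=\prod_{j=1}^k X_j^{1/2}$ is a product of independent variables, so that $\Ex R_k^{1/2}=\prod_{j=1}^k\Ex X_j^{1/2}\leq\lambda^k$, gives \eqref{eq:sqrt1} (the term $k=0$ contributing $\Ex R_0^{1/2}=1=\lambda^0$).

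The step linking the two parts is a Cauchy--Schwarz estimate. Writing $M:=\sum_{k=0}^n\lambda^k\|v_k\|$ and splitting $\lambda^k=\lambda^{k/2}\cdot\lambda^{k/2}$, Cauchy--Schwarz together with the geometric series bound $\sum_{k\geq 0}\lambda^k\leq(1-\lambda)^{-1}$ yields
\[
\sum_{k=0}^n\lambda^k\|v_k\|^{1/2}=\sum_{k=0}^n\lambda^{k/2}\,(\lambda^k\|v_k\|)^{1/2}\leq\Big(\sum_{k=0}^n\lambda^k\Big)^{1/2}M^{1/2}\leq\Big(\frac{M}{1-\lambda}\Big)^{1/2}.
\]
Combined with \eqref{eq:sqrt1}, this bounds $\Ex\|\sum_{k=0}^n v_kR_k\|^{1/2}$ by $(M/(1-\lambda))^{1/2}$.

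Finally I would obtain \eqref{eq:sqrt2} by applying Markov's inequality to the nonnegative random variable $\|\sum_{k=0}^n v_kR_k\|^{1/2}$. Assuming, as we may, that the coefficients do not all vanish (so $M>0$), for $t\geq 1$ we get
\[
\Pr\bigg(\bigg\|\sum_{k=0}^n v_kR_k\bigg\|\geq\frac{t}{1-\lambda}M\bigg)\leq\frac{\Ex\big\|\sum_{k=0}^n v_kR_k\big\|^{1/2}}{(tM/(1-\lambda))^{1/2}}\leq\frac{(M/(1-\lambda))^{1/2}}{(tM/(1-\lambda))^{1/2}}=\frac{1}{\sqrt{t}}.
\]
There is no serious obstacle here: the lemma is essentially a square-root moment computation. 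The only point requiring care is the passage from the moment bound to the tail bound, where the threshold must be chosen so that the factors of $M$ and $1-\lambda$ cancel exactly, and this cancellation is precisely what the Cauchy--Schwarz step in the previous paragraph is arranged to deliver.
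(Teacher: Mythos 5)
Your proof is correct and takes essentially the same route as the paper: the triangle inequality plus subadditivity of $\sqrt{\cdot}$ and independence (so $\Ex R_k^{1/2}\leq\lambda^k$) give \eqref{eq:sqrt1}, and then Cauchy--Schwarz with the geometric series bound followed by Markov's (Chebyshev's) inequality applied to $\|\sum_{k=0}^n v_kR_k\|^{1/2}$ gives \eqref{eq:sqrt2}. Your explicit handling of the degenerate case $M=0$ is a minor point the paper leaves implicit, but otherwise the two arguments coincide.
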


\begin{proof}
We have
\[
\Ex\bigg\|\sum_{k=0}^n v_kR_k\bigg\|^{1/2}\leq
\sum_{k=0}^n \Ex\|v_kR_k\|^{1/2}\leq \sum_{k=0}^n \lambda^k\|v_k\|^{1/2}.
\]
By the Cauchy-Schwarz inequality
\[
\bigg(\sum_{k=0}^n \lambda^k\|v_k\|^{1/2}\bigg)^2\leq 
\sum_{k=0}^n \lambda^k \sum_{k=0}^n \lambda^k\|v_k\|
\leq \frac{1}{1-\lambda}\sum_{k=0}^n \lambda^k\|v_k\|,
\]
and the estimate \eqref{eq:sqrt2} follows by \eqref{eq:sqrt1} and Chebyshev's
inequality.
\end{proof}

Now we are ready to formulate a main technical result that will easily imply Theorem \ref{thm:noniid1}.

\begin{prop}
\label{indest}
Let $X_1,X_2,\ldots$ satisfy assumption \eqref{ass1} and \eqref{ass2} and $0<\ve<\frac{1}{8}$ be such
that $\Pr(X_i\leq \ve)\geq p>0$ for all $i$.
Then for any vectors $v_0,v_1,\ldots,v_n\in F$  we have
\[
\Ex\bigg\|\sum_{i=0}^nv_iR_i\bigg\|\geq \alpha \|v_0\|+\sum_{k=1}^n(\beta-c_k)\|v_k\|,
\] 
where
\[
\alpha:=\frac{1}{16}p,\quad \beta:=\min\Big\{\frac{\alpha}{2},\frac{1}{32}\mu p\Big\} \quad \mbox{and}\quad
c_k:=\frac{4p\varepsilon}{1-\lambda}\sum_{i=0}^{k-1} \lambda^i.
\]
\end{prop}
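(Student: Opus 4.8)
The plan is to prove Proposition~\ref{indest} by induction on $n$, exploiting the self‑similar structure of the products. Writing $R_i=X_1\prod_{j=2}^iX_j$ gives the recursion
\[
\sum_{i=0}^nv_iR_i=v_0+X_1S',\qquad S':=\sum_{k=1}^nv_k\prod_{j=2}^kX_j ,
\]
where $S'$ is independent of $X_1$ and is an exact copy of the $(n-1)$‑variable problem with coefficients $v_1,\dots,v_n$ and shifted sequence $X_2,X_3,\dots$ (here $v_{m+1}$ plays the role of the $m$‑th coefficient). Since $X_1\ge0$ and $\Ex X_1=1$ we have $\Ex\|X_1S'\|=\Ex\|S'\|$, and $X_1S'=\sum_{k=1}^nv_kR_k$, so Lemma~\ref{sqrt} applies verbatim to $X_1S'$ and to $S'$. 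Let $I$ denote the inductive lower bound for $\Ex\|S'\|$. Reindexing its coefficients and using $c_{k-1}\le c_k$ together with $\alpha\ge\beta$ gives
\[
I\ \ge\ \sum_{k=1}^n(\beta-c_k)\|v_k\|+4p\ve M,\qquad M:=\frac1{1-\lambda}\sum_{k=1}^n\lambda^{k-1}\|v_k\|,
\]
so it suffices to establish the step in the form $\Ex\|S\|\ge\alpha\|v_0\|+I-4p\ve M$.

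For the step I would condition on $B:=\{X_1\le\ve\}$, which has probability at least $p$ and on which the whole tail $X_1S'$ is at most $\ve\|S'\|$. On $B^c$ I would apply the inductive hypothesis conditionally on the value $X_1=x$: for fixed $x$ the vector $v_0+xS'$ is again an $(n-1)$‑problem, with constant coefficient $v_0+xv_1$ and remaining coefficients $xv_2,\dots,xv_n$, so the hypothesis returns $\alpha\|v_0+xv_1\|+x\sum_{k\ge2}(\beta-c_{k-1})\|v_k\|$; integrating against $\Ex X_1=1$ recovers the full tail $I$ while only the cheap quantity $\alpha\|v_0\|$ (not $\|v_0\|$) is ever put at risk, and the replacement of $c_{k-1}$ by $c_k$ is exactly what the slack $4p\ve M$ pays for. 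On $B$ I would extract $v_0$ itself: because $\Ex X_1=1$ while $X_1$ carries mass $\ge p$ near $0$, the tail $X_1S'$ is negligible with good conditional probability (quantified by Lemma~\ref{sqrt}), so Lemma~\ref{smallY} applied to $v_0+X_1S'$ yields a gain of order $\tfrac18\Pr(B)\ge\alpha$ in the direction of $v_0$, while the retained tail mass $\Ex[\|X_1S'\|\ind_B]$ cancels the averaging loss from the $B^c$ computation. The factor $\mu$ enters only through Lemma~\ref{singleest}: recovering the coefficient $v_1$ (the constant term of $S'$) after multiplication by $X_1$ costs a factor $\tfrac12\Ex|X_1-1|\ge\tfrac\mu2$, which is the source of $\tfrac1{32}\mu p=\tfrac\mu2\cdot\alpha$ in the min defining $\beta$.

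The hard part will be the interface between $v_0$, whose coefficient $R_0=1$ is deterministic, and the random tail $Y:=X_1S'$, which may align against $v_0$ and cancel it; this is why one cannot simply add $\alpha\|v_0\|$ on top of $\Ex\|Y\|$ (indeed $\Ex\|v_0+Y\|$ can equal $\Ex\|Y\|$). The extraction of $v_0$ is clean only at the two ends of the scale: when $\|v_0\|$ is small compared with $p\ve M$ the triangle bound $\Ex\|S\|\ge\Ex\|S'\|-\|v_0\|\ge I-\|v_0\|$ already suffices, and when $\|v_0\|$ dominates the tail scale $M$, Lemma~\ref{sqrt} guarantees $\Pr(\|Y\|>\|v_0\|/4)\le1/4$ so that Lemma~\ref{smallY} applies directly to $v_0+Y$. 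The genuinely delicate regime is $\|v_0\|$ comparable to $M$, where neither term dominates; here I expect to interpolate between the two estimates, balancing the smallness gain against the cancellation penalty controlled by Lemma~\ref{sqrt}, and it is this balancing that pins down the numerical constants (the powers of $2$ and the threshold appearing in the definition of $c$ in Theorem~\ref{thm:noniid1}).
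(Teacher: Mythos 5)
Your overall architecture matches the paper's: induction on $n$, a dichotomy according to the size of $\|v_0\|$ against the weighted tail $M$, and in the regime where $\|v_0\|$ dominates, the combination of Lemma~\ref{sqrt} (to get $\Pr(\|Y\|>\|v_0\|/4)\leq 1/4$ on $\{X_1\leq\ve\}$) with Lemma~\ref{smallY}, using part of the gain $\frac18\Pr(X_1\leq\ve)\|v_0\|\geq 2\alpha\|v_0\|$ to cancel the triangle-inequality loss $\alpha\|v_0\|$ incurred when re-inserting $v_0$ into the conditional induction bound; that is exactly the paper's Case~2. The gap is in your treatment of small $\|v_0\|$ and the ``delicate middle regime'' you yourself flag. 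Your small-$\|v_0\|$ argument is the plain triangle bound $\Ex\|S\|\geq I-\|v_0\|$, which loses a \emph{full} $\|v_0\|$ and therefore only closes the induction when $(1+\alpha)\|v_0\|\leq 4p\ve M$, i.e.\ $\|v_0\|\lesssim p\ve M$. Your large-$\|v_0\|$ argument needs $\|v_0\|\geq 64\ve M$, since Lemma~\ref{sqrt} with threshold $\|v_0\|/(4\ve)$ requires $\|v_0\|/(4\ve)\geq 16M$ to yield the probability bound $1/4$ that Lemma~\ref{smallY} demands. Between these thresholds lies a window $[\,cp\ve M,\;64\ve M\,]$ whose width blows up as $p\to 0$, and neither estimate degrades gracefully into it: Lemma~\ref{smallY} gives nothing once its probability hypothesis fails, and the triangle bound's deficit grows linearly in $\|v_0\|$. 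So the proposed ``interpolation'' is not a repairable detail; as stated there is no argument covering this regime.

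The idea you are missing is how the paper handles \emph{all} of $\|v_0\|\leq 64\ve M$ in one stroke (its Case~1), and it is an idea whose ingredients you already wrote down for your $B^c$ computation: apply the induction hypothesis conditionally on $X_1$ to the whole expectation (no splitting into $\{X_1\leq \ve\}$ and its complement), with zeroth coefficient $v_0+v_1X_1$ and remaining coefficients $v_kX_1$, obtaining
\[
\Ex\bigg\|\sum_{i=0}^{n+1}v_iR_i\bigg\|\geq \alpha\,\Ex\|v_0+v_1X_1\|+\sum_{k=2}^{n+1}(\beta-c_{k-1})\|v_k\|,
\]
and then \emph{discard $v_0$ inside the norm} via Lemma~\ref{singleest}: $\alpha\Ex\|v_0+v_1X_1\|\geq \frac{\alpha\mu}{2}\|v_1\|\geq\beta\|v_1\|$. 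The point is that one never pays $\|v_0\|$ itself; the only cost is the absent term $\alpha\|v_0\|$ in the target, and since $\alpha=p/16$ the case assumption $\|v_0\|\leq 64\ve M$ gives $\alpha\|v_0\|\leq 4p\ve M$, which is exactly absorbed by the increments $c_k-c_{k-1}$ (your slack term). Because the loss carries the prefactor $\alpha\sim p$ rather than $1$, this case reaches all the way up to $64\ve M$, i.e.\ precisely to the threshold where your Lemma~\ref{sqrt}--\ref{smallY} argument takes over, and the middle regime disappears.
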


\begin{proof}
We will proceed by induction on $n$. For $n=0$ the assertion is obvious, since $\alpha\leq 1$.

Now suppose that the induction assertion holds for $n$, we will show it for $n+1$. To this end
we consider two cases. To shorten the notation we put
\[
\tilde{R}_1:=1 \quad \mbox{and} \quad \tilde{R}_k:=\prod_{i=2}^kX_i \mbox{ for } k=2,3,\ldots. 
\]

\medskip
\noindent
{\bf Case 1.} $\|v_0\|\leq \frac{64\ve}{1-\lambda}\sum_{k=1}^{n+1}\lambda^{k-1}\|v_k\|$. 

By the induction assumption (applied conditionally on $X_1$)  we have
\begin{align*}
\Ex\bigg\|\sum_{i=0}^{n+1}v_iR_i\bigg\|&\geq \alpha \Ex\|v_0+v_1X_1\|+\sum_{k=2}^{n+1}(\beta-c_{k-1})\Ex\|X_1v_k\|
\\
&\geq \beta\|v_1\|+\sum_{k=2}^{n+1}(\beta-c_{k-1})\|v_k\|
\\
&\geq \alpha\|v_0\|-\frac{4p\ve}{1-\lambda}\sum_{k=1}^{n+1}\lambda^{k-1}\|v_k\|+\beta\|v_1\|
+\sum_{k=2}^{n+1}(\beta-c_{k-1})\|v_k\|
\\
&=\alpha\|v_0\|+\sum_{k=1}^{n+1}(\beta-c_k)\|v_k\|,
\end{align*}
where the second inequality follows by Lemma \ref{singleest}.

\medskip

\noindent
{\bf Case 2.} $\|v_0\|\geq \frac{64\ve}{1-\lambda}\sum_{k=1}^{n+1}\lambda^{k-1}\|v_k\|$.

The induction assumption, applied conditionally on $X_1$, yields
\begin{align}
\notag
\Ex\bigg\|\sum_{i=0}^{n+1}&v_iR_i\bigg\|\ind_{\{X_1>\ve\}}
\\
\label{largeX1}
&\geq \alpha \Ex\|v_0+v_1X_1\|\ind_{\{X_1>\ve\}}+\sum_{k=2}^{n+1}(\beta-c_{k-1})\Ex\|X_1v_k\|\ind_{\{X_1>\ve\}}.
\end{align}

Let $Y$ has the same distribution as $\sum_{i=1}^{n+1}v_iR_i=X_1\sum_{i=1}^{n+1}v_i\tilde{R}_i$ conditioned on the set 
$\{X_1\leq \ve\}$.
Then
\begin{align*}
\Pr\Big(\|Y\|>\frac{1}{4}\|v_0\|\Big)
&\leq \Pr\bigg(\ve\bigg\|\sum_{i=1}^{n+1}v_i\tilde{R}_i\bigg\|> \frac{1}{4}\|v_0\|\bigg)
\\
&\leq \Pr\bigg(\bigg\|\sum_{i=1}^{n+1}v_i\tilde{R}_i\bigg\|> \frac{16}{1-\lambda}\sum_{k=1}^{n+1}\lambda^{k-1}\|v_k\|\bigg)
\leq \frac{1}{4}   
\end{align*}
by Lemma \ref{sqrt}. Thus we may apply Lemma \ref{smallY} and get
\begin{align*}
\Ex\bigg\|\sum_{i=0}^{n+1}v_iR_i\bigg\|\ind_{\{X_1\leq \ve\}}&=\Pr(X_1\leq \ve)\Ex\|v_0+Y\|\geq
\Pr(X_1\leq \ve)\Big(\Ex\|Y\|+\frac{\|v_0\|}{8}\Big)
\\
&=\Ex\bigg\|\sum_{i=1}^{n+1}v_iR_i\bigg\|\ind_{\{X_1\leq \ve\}}+\frac{\|v_0\|}{8}\Pr(X_1\leq \ve).
\end{align*}

By the induction assumptions we get
\begin{align*}
\Ex\bigg\|\sum_{i=1}^{n+1}&v_iR_i\bigg\|\ind_{\{X_1\leq \ve\}}
\geq \alpha \Ex\|v_1X_1\|\ind_{\{X_1\leq \ve\}}+
\sum_{k=2}^{n+1}(\beta-c_{k-1})\Ex\|v_kX_1\|\ind_{\{X_1\leq \ve\}}
\\
&\geq \alpha \Ex\|v_0+v_1X_1\|\ind_{\{X_1\leq \ve\}}-\alpha\|v_0\|+
\sum_{k=2}^{n+1}(\beta-c_{k-1})\Ex\|v_kX_1\|\ind_{\{X_1\leq \ve\}}
\end{align*}
The above inequalities and our choice of $\alpha$ imply
\begin{align*}
\Ex\bigg\|\sum_{i=0}^{n+1}&v_iR_i\bigg\|\ind_{\{X_1\leq \ve\}}
\\
&\geq \alpha\Ex\|v_0+v_1X_1\|\ind_{\{X_1\leq \ve\}}+
\sum_{k=2}^{n+1}(\beta-c_{k-1})\Ex\|v_kX_1\|\ind_{\{X_1\leq \ve\}}+\alpha \|v_0\|.
\end{align*}

Together with \eqref{largeX1} this gives
\begin{align*}
\Ex\bigg\|\sum_{i=0}^{n+1}v_iR_i\bigg\|
&\geq \alpha\|v_0\|+\alpha\Ex\|v_0+v_1X_1\|+\sum_{k=2}^{n+1}(\beta-c_{k-1})\|v_k\|
\\
&\geq \alpha\|v_0\|+\beta\|v_1\|+\sum_{k=2}^{n+1}(\beta-c_{k-1})\|v_k\|
\\
&\geq \alpha\|v_0\|+\sum_{k=1}^{n+1}(\beta-c_{k})\|v_k\|,
\end{align*}
where the second inequality follows by Lemma \ref{singleest}.
\end{proof}

\begin{proof}[Proof of Theorem \ref{thm:noniid1}]
We apply Proposition \ref{indest} with $\ve:=\frac{(1-\lambda)^2}{256}\min\{\mu,1\}$ and 
$p:=\min_i\Pr(X_i\leq \ve)$. Notice that then 
$\beta = \frac{1}{32}\min\{\mu,1\}p\leq \alpha$ and we get
\begin{align*}
\Ex\bigg\|\sum_{i=0}^nv_iR_i\bigg\|&\geq \alpha\|v_0\|+
\sum_{i=0}^n (\beta-c_i)\|v_i\|\geq \Big(\beta-\frac{4p\ve}{(1-\lambda)^2}\Big)\sum_{i=0}^n \|v_i\|
\\
&\geq \frac{\beta}{2}\sum_{i=0}^n \|a_i\|.
\end{align*}
\end{proof}

\section{Proof of Theorem \ref{thm:noniid2}}

We start with a few refinements of lemmas from the previous section.

\begin{lem}
\label{lem:tech1}
Suppose that $X$ is nonnegative $\Ex X=1$, $\Ex|X-1|\geq\mu$ and $\Ex|X-1|\ind_{\{X>A\}}\leq \frac{1}{4}\mu$. 
Then
\[
\Ex\|uX+v\|\ind_{\{X\leq A\}}\geq \frac{1}{8}\mu\|v\|\quad \mbox{ for any }u,v\in F.
\]
\end{lem}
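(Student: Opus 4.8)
The plan is to bound $E:=\Ex\|uX+v\|\ind_{\{X\le A\}}$ from below in two complementary ways, each producing a term proportional to $\|u+v\|$ carrying the \emph{same} coefficient $a:=\Ex X\ind_{\{X\le A\}}$, and then simply to add the two estimates so that these unbounded cross-terms cancel. This is the crux: the naive route (use the reverse triangle inequality to extract $\|v\|$ and then absorb the error $\|u+v\|$) cannot work directly, because $\|u+v\|$ is not controlled by $\|v\|$; arranging the coefficients to match is what saves the argument.

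First I would record the pointwise identity already used in the proof of Lemma \ref{singleest}, namely $uX+v=v(1-X)+(u+v)X$, which gives (since $X\ge 0$) the pointwise bound $\|uX+v\|\ge \|v\|\,|1-X|-\|u+v\|X$. Multiplying by $\ind_{\{X\le A\}}$ and taking expectations yields
\[
E\ge \|v\|\,\Ex|1-X|\ind_{\{X\le A\}}-\|u+v\|\,\Ex X\ind_{\{X\le A\}}.
\]
The first coefficient is handled by the hypotheses: $\Ex|1-X|\ind_{\{X\le A\}}=\Ex|X-1|-\Ex|X-1|\ind_{\{X>A\}}\ge \mu-\tfrac14\mu=\tfrac34\mu$. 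Hence the first bound reads $E\ge \tfrac34\mu\|v\|-a\|u+v\|$.

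Next I would produce the matching lower bound by convexity (Jensen for the Bochner integral, legitimate as $F$ is finite dimensional). Since $\|uX+v\|\ind_{\{X\le A\}}=\|(uX+v)\ind_{\{X\le A\}}\|$ pointwise,
\[
E\ge \big\|\Ex\big[(uX+v)\ind_{\{X\le A\}}\big]\big\|=\|au+bv\|,\qquad a=\Ex X\ind_{\{X\le A\}},\ b=\Pr(X\le A).
\]
Writing $au+bv=a(u+v)+(b-a)v$ gives $E\ge a\|u+v\|-|b-a|\,\|v\|$, and here $|b-a|=\big|\Ex(1-X)\ind_{\{X\le A\}}\big|=\big|\Ex(X-1)\ind_{\{X>A\}}\big|\le \Ex|X-1|\ind_{\{X>A\}}\le \tfrac14\mu$, where the middle equality uses $\Ex(X-1)=0$. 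Thus the second bound is $E\ge a\|u+v\|-\tfrac14\mu\|v\|$.

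Finally I would add the two estimates: the terms $\pm a\|u+v\|$ cancel and $2E\ge \tfrac34\mu\|v\|-\tfrac14\mu\|v\|=\tfrac12\mu\|v\|$, so $E\ge \tfrac14\mu\|v\|\ge \tfrac18\mu\|v\|$, which is the claim (in fact with a better constant). The step I expect to be the genuine obstacle is precisely the one I flagged at the outset: one must resist the temptation to bound $\Ex X\ind_{\{X\le A\}}$ by $1$ in the first estimate. Keeping the exact coefficient $a$ in \emph{both} inequalities is what makes the cross-terms cancel; replacing it by $1$ would instead force a case split on $\|u+v\|/\|v\|$ supplemented by a lower bound on $a$, and no such bound on $a$ is available from \eqref{ass2}--\eqref{ass3} alone.
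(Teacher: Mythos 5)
Your proof is correct, and it in fact yields the stronger constant $\tfrac{1}{4}\mu\|v\|$: with your notation $E=\Ex\|uX+v\|\ind_{\{X\le A\}}$, $a=\Ex X\ind_{\{X\le A\}}$, $b=\Pr(X\le A)$, the pointwise bound coming from $uX+v=v(1-X)+(u+v)X$ restricted to $\{X\le A\}$, the Jensen bound $E\ge\|au+bv\|\ge a\|u+v\|-|b-a|\,\|v\|$, the identity $b-a=\Ex(X-1)\ind_{\{X>A\}}$ (valid since $\Ex X=1$), and the final addition all check out. The paper's proof shares the same skeleton---a triangle-inequality decomposition plus Jensen to neutralize the uncontrollable $\|u+v\|$-type term---but organizes it differently: it passes to the conditional distribution $Y$ of $X$ given $\{X\le A\}$, sets $p=\Ex Y\le 1$, and centers the decomposition at $p$, writing $p(uY+v)=v(p-Y)+(pu+v)Y$. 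The cross term is then exactly $\|pu+v\|$, which Jensen bounds by $\Ex\|uY+v\|$ itself, giving the self-bounding inequality $2\Ex\|uY+v\|\ge\frac{1}{p}\|v\|\Ex|Y-p|$; no mismatch term appears, and the tail hypothesis enters only once, through $\Ex|X-p|\ind_{\{X\le A\}}\ge\Ex(X-1)_{+}\ind_{\{X\le A\}}\ge\frac{1}{2}\mu-\frac{1}{4}\mu=\frac{1}{4}\mu$. Centering at the unknown $p$ forces the paper to retain only the positive part of the deviation (nothing controls $|X-p|$ from below when $X<1$), which is exactly where it loses a factor $2$ relative to you; you center at $1$, keep the full $\Ex|X-1|\ind_{\{X\le A\}}\ge\frac{3}{4}\mu$, and pay only the mismatch $|b-a|\le\frac{1}{4}\mu$, at the price of invoking the tail hypothesis twice instead of once. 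Your version also avoids conditional distributions altogether, and your closing observation---that the cancellation works only because both bounds carry the identical coefficient $a$---is precisely the role that centering at the conditional mean $p$ plays in the paper's argument.
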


\begin{proof}
Let $Y$ has the same distribution as $X$ conditioned on the set $\{X\leq A\}$. Then $p:=\Ex Y\leq \Ex X= 1$ and
\[
\Ex\|uX+v\|\ind_{\{X\leq A\}}=\Pr(X\leq A)\Ex\|uY+v\|\geq \Pr(X\leq A)\|up+v\|.
\] 
We have $\Ex(X-1)_+=\Ex(X-1)_{-}\geq \frac{1}{2}\mu$, so
\[
\Pr(X\leq A)\Ex|Y-p|=\Ex|X-p|\ind_{\{X\leq A\}}\geq \Ex(X-1)_{+}\ind_{\{X\leq A\}}\geq \frac{1}{4}\mu
\] 
and
\begin{align*}
\Ex\|uY+v\|&=\frac{1}{p}\Ex\|v(p-Y)+(pu+v)Y\|\geq \|v\|\frac{1}{p}\Ex|Y-p|-\|pu+v\|\frac{1}{p}\Ex Y
\\
&\geq\frac{1}{4\Pr(X\leq A)}\mu\|v\|-\Ex\|uY+v\|.
\end{align*}
\end{proof}

\begin{lem}
\label{lem:tech2}
Let $Y$ and $Z$ be random vectors in $F$ such that 
\[
\Ex\|Z\|\ind_{\{\|Y\|>\frac{1}{8}\Ex \|Z\|\}}\leq \frac{1}{8}\Ex\|Z\|.
\]
Then $\Ex\|Y+Z\|\geq \Ex\|Y\|+\frac{1}{2}\Ex\|Z\|$.
\end{lem}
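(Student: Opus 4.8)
The plan is to generalize Lemma~\ref{smallY}, where the added vector was deterministic, to the present setting in which both summands are random, by conditioning on the size of $\|Y\|$ relative to the typical size $m:=\Ex\|Z\|$ of $Z$. Concretely, I would split the probability space into the event $E:=\{\|Y\|>\frac{1}{8}m\}$ and its complement $E^c$, and bound $\Ex\|Y+Z\|$ separately on each piece via the reverse triangle inequality, orienting the inequality to exploit whichever of $\|Y\|$, $\|Z\|$ dominates there.

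On $E$, where $\|Y\|$ may be large, I would use $\|Y+Z\|\geq\|Y\|-\|Z\|$, so that
\[
\Ex\|Y+Z\|\ind_E\geq\Ex\|Y\|\ind_E-\Ex\|Z\|\ind_E\geq\Ex\|Y\|\ind_E-\tfrac{1}{8}m,
\]
the last step being exactly the hypothesis. On $E^c$, where $\|Y\|\leq\frac{1}{8}m$, the vector $Z$ carries most of its mass: the hypothesis gives $\Ex\|Z\|\ind_{E^c}=m-\Ex\|Z\|\ind_E\geq\frac{7}{8}m$, and here I would use $\|Y+Z\|\geq\|Z\|-\|Y\|$ to obtain
\[
\Ex\|Y+Z\|\ind_{E^c}\geq\Ex\|Z\|\ind_{E^c}-\Ex\|Y\|\ind_{E^c}\geq\tfrac{7}{8}m-\Ex\|Y\|\ind_{E^c}.
\]

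Adding the two bounds yields $\Ex\|Y+Z\|\geq\Ex\|Y\|\ind_E-\Ex\|Y\|\ind_{E^c}+\frac{6}{8}m$. The one point requiring care — and the only real obstacle — is the stray negative term $-\Ex\|Y\|\ind_{E^c}$: rewriting $\Ex\|Y\|\ind_E=\Ex\|Y\|-\Ex\|Y\|\ind_{E^c}$ turns the first two terms into $\Ex\|Y\|-2\Ex\|Y\|\ind_{E^c}$, and since $\|Y\|\leq\frac{1}{8}m$ on $E^c$ we have $\Ex\|Y\|\ind_{E^c}\leq\frac{1}{8}m$, whence $2\Ex\|Y\|\ind_{E^c}\leq\frac{1}{4}m$. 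Combining, $\Ex\|Y+Z\|\geq\Ex\|Y\|-\frac{1}{4}m+\frac{3}{4}m=\Ex\|Y\|+\frac{1}{2}m$, as desired. I expect the whole argument to be short; the only genuine subtlety is that the threshold $\frac{1}{8}m$ is calibrated so that both the hypothesis bound on $E$ and the smallness of $\|Y\|$ on $E^c$ leave exactly enough room to reach the constant $\frac{1}{2}$.
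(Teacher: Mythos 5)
Your proof is correct and takes essentially the same approach as the paper's: the same split on the event $\{\|Y\|>\frac{1}{8}\Ex\|Z\|\}$, the same orientation of the reverse triangle inequality on each piece, and the same two inputs (the hypothesis controlling $\Ex\|Z\|\ind_{\{\|Y\|>\frac{1}{8}\Ex\|Z\|\}}$, plus the pointwise bound $\|Y\|\leq\frac{1}{8}\Ex\|Z\|$ on the complement). The only difference is bookkeeping: the paper writes both lower bounds in the form $\|Y\|+\|Z\|-2(\cdot)$ and estimates after combining, whereas you estimate each piece separately and recombine, reaching the identical constant $\frac{1}{2}$.
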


\begin{proof}
We have
\begin{align*}
\Ex\|Y+Z\|
&\geq \Ex(\|Y\|+\|Z\|-2\|Z\|)\ind_{\{\|Y\|>\frac{1}{8}\Ex\|Z\|\}}
\\
&\phantom{>}+\Ex(\|Y\|+\|Z\|-2\|Y\|)\ind_{\{\|Y\|\leq\frac{1}{8}\Ex\|Z\|\}}
\\
&=\Ex\|Y\|+\Ex\|Z\|-2\Ex\|Z\|\ind_{\{\|Y\|>\frac{1}{8}\Ex\|Z\|\}}-2\Ex\|Y\|\ind_{\{\|Y\|\leq\frac{1}{8}\Ex \|Z\|\}}
\\
&\geq \Ex\|Y\|+\Ex\|Z\|-\frac{2}{8}\Ex\|Z\|-\frac{2}{8}\Ex\|Z\|=\Ex\|Y\|+\frac{1}{2}\Ex\|Z\|.
\end{align*}
\end{proof}

\begin{lem}
\label{lem:tech3}
Suppose that $X_1,\ldots,X_n$ are independent, nonnegative and $\Ex|X_i-1|\geq \mu$. Then for any vectors
$v_0,\ldots,v_n\in F$,
\[
\Ex\Big\|\sum_{i=0}^n v_iR_i\Big\|\geq \frac{1}{4}\mu^2\max\{\|v_0\|,\ldots,\|v_n\|\}.
\]
In particular
\[
\Ex\Big\|\sum_{i=0}^k v_iR_i\Big\|\geq \frac{1}{4k}\mu^2\sum_{i=1}^k\|v_i\|. 
\]
\end{lem}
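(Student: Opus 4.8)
The plan is to prove Lemma~\ref{lem:tech3} by first establishing the maximum bound and then deriving the sum bound as an immediate consequence. For the main estimate, I would fix an index $m$ with $\|v_m\|=\max_i\|v_i\|$ and aim to extract a factor of $\|v_m\|$ from the expected norm. The natural idea is to condition on all the variables except those that let us isolate the $m$-th term. Writing $R_m=R_{m-1}X_m$ and grouping the sum as $\sum_{i<m}v_iR_i + R_{m-1}(v_mX_m + \sum_{i>m}v_i\prod_{j=m+1}^iX_j)$, I would try to apply Lemma~\ref{singleest} twice to peel off two factors of $\frac{1}{2}\mu$, which would explain the constant $\frac{1}{4}\mu^2$.

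More concretely, I would condition on all $X_j$ with $j\neq m$, treating $X_m$ as the single free variable. For fixed values of the other $X_j$, the sum has the form $uX_m + v$ where $u=R_{m-1}v_m$ (the $R_{m-1}$ being a known nonnegative scalar after conditioning) and $v$ collects the terms not involving $X_m$, namely $\sum_{i<m}v_iR_i$ together with the tail $\sum_{i>m}v_i R_i$, each of which factors through $X_m$ for $i>m$; so I would instead split at a single position where $X_m$ appears linearly. Applying Lemma~\ref{singleest} conditionally gives $\Ex_{X_m}\|uX_m+v\|\geq \frac{1}{2}\Ex|X_m-1|\,\|u\| \geq \frac{1}{2}\mu\,\|R_{m-1}v_m\|=\frac{1}{2}\mu R_{m-1}\|v_m\|$. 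Taking expectation over the remaining variables reduces the problem to lower-bounding $\frac{1}{2}\mu\|v_m\|\Ex R_{m-1}$; since $\Ex R_{m-1}=1$ this would only yield one factor of $\frac{1}{2}\mu$, so a second application of Lemma~\ref{singleest} at another index (say isolating $X_1$ inside $R_{m-1}$) is needed to recover the square. The key is to choose the two conditioning steps so the $\max\{\|u\|,\|v\|\}$ in Lemma~\ref{singleest} always picks out the $\|v_m\|$ term.

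The ``in particular'' statement then follows directly: applying the maximum bound to the truncated sum $\sum_{i=0}^k v_iR_i$ gives $\Ex\|\sum_{i=0}^k v_iR_i\|\geq \frac{1}{4}\mu^2\max_{0\leq i\leq k}\|v_i\|$, and since the maximum of $k$ nonnegative reals is at least their average, $\max_{1\leq i\leq k}\|v_i\|\geq \frac{1}{k}\sum_{i=1}^k\|v_i\|$, which yields the stated $\frac{1}{4k}\mu^2\sum_{i=1}^k\|v_i\|$ (dropping the $i=0$ term only strengthens the lower bound on the left).

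The main obstacle I anticipate is arranging the two conditioning steps so that both invocations of Lemma~\ref{singleest} genuinely act on $\|v_m\|$ rather than on mixed terms. When I condition on all variables but $X_m$, the constant multiplier $R_{m-1}$ is itself random and I must ensure its expectation (or the surviving product after a second peeling) does not dissipate the bound; the cleanest route is probably to pick the index $m$ realizing the maximum, peel $X_m$ first to get $\frac{1}{2}\mu R_{m-1}\|v_m\|$, and then peel one further variable from $R_{m-1}$ (treating the rest as a product that integrates to~$1$ by independence and mean~one) to land on $\frac{1}{4}\mu^2\|v_m\|$. Verifying that the $\max\{\|u\|,\|v\|\}$ always dominates $\|v_m\|$ at each step, and that independence lets the untouched factors contribute exactly their unit means, is where the care is required.
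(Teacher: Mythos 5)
Your skeleton (two conditional applications of Lemma~\ref{singleest}, followed by $\Ex R_{m-1}=1$) is the same as the paper's, but the concrete step you rely on at the first peeling is false, and your proposed second peeling is aimed at the wrong variable. After conditioning on all $X_j$ with $j\neq m$, the sum is indeed linear in $X_m$, but the coefficient of $X_m$ is not $R_{m-1}v_m$: it is
\[
u=R_{m-1}\Big(v_m+X_{m+1}\sum_{i=m+1}^n v_i\prod_{j=m+2}^i X_j\Big),
\]
since every term with $i>m$ also contains the factor $X_m$. Lemma~\ref{singleest} then gives $\frac12\Ex|X_m-1|\,\|u\|$, but your next inequality $\|u\|\geq\|R_{m-1}v_m\|$ fails in general, because the tail can cancel $v_m$ (take for instance $v_{m+1}=-v_m$ and $X_{m+1}$ close to $1$). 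This cancellation is the entire difficulty of the lemma, and it cannot be repaired by peeling a variable \emph{inside} $R_{m-1}$: that factor is a nonnegative scalar, causes no cancellation, and is disposed of for free by $\Ex R_{m-1}=1$. A further sign that the logic has gone astray: if your first inequality were valid, you would already be done with the better constant $\frac12\mu\geq\frac14\mu^2$ (recall $\mu\leq\Ex X_m+1=2$), so no ``second application to recover the square'' would be needed; the $\mu^2$ in the statement is exactly the price of handling the tail.

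The missing idea --- and what the paper actually does --- is to spend the second application of Lemma~\ref{singleest} on $X_{m+1}$, the variable that separates $v_mR_{m-1}$ from the tail. Write $u=v_mR_{m-1}+X_{m+1}Z$, where $Z=R_{m-1}\sum_{i=m+1}^n v_i\prod_{j=m+2}^iX_j$ is independent of $(X_m,X_{m+1})$. Conditioning on everything except $X_{m+1}$ and letting the $\max$ in Lemma~\ref{singleest} pick the \emph{constant} term (legitimate whatever $Z$ is) gives
\[
\Ex\|u\|\geq\tfrac12\Ex|X_{m+1}-1|\,\Ex\|v_mR_{m-1}\|\geq\tfrac12\mu\,\|v_m\|\,\Ex R_{m-1}=\tfrac12\mu\|v_m\|,
\]
and combined with the first peeling (where the $\max$ picks $\|u\|$) this yields $\frac14\mu^2\|v_m\|$ for every index $m$, hence the stated maximum bound; when $m=n$ there is no tail and a single peeling already gives $\frac12\mu\|v_n\|\geq\frac14\mu^2\|v_n\|$. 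Your derivation of the ``in particular'' part (the maximum of $k$ nonnegative numbers is at least their average, and dropping the $i=0$ term only helps) is correct and is the intended one.
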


\begin{proof}
We have for any $0\leq j\leq n$,
$\sum_{i=0}^n v_iR_i=Y+X_j(v_jR_{j-1}+X_{j+1}Z)$, where variables $Y$ and $Z$ are independent
of $X_j$ and $X_{j+1}$. So Lemma \ref{singleest} applied conditionally yields
\begin{align*}
\Ex\Big\|\sum_{i=0}^n v_iR_i\Big\|
&\geq \frac{1}{2}\Ex|X_j-1|\Ex\|v_jR_{j-1}+X_{j+1}Z\|
\\
&\geq\frac{1}{2}\Ex|X_j-1|\frac{1}{2}\Ex|X_{j+1}-1|\Ex\|v_jR_{j-1}\|
\geq \frac{1}{4}\mu^2\|v_j\|.
\end{align*}
\end{proof}

Next statement is a variant of Proposition \ref{indest}.

\begin{prop}
\label{prop:indest2}
Let $X_1,X_2,\ldots$ satisfy assumption \eqref{ass1}-\eqref{ass3} and $k\geq 1$.
Then for any vectors $v_0,v_1,\ldots,v_n\in F$ and $\ve>0$ we have
\[
\Ex\bigg\|\sum_{i=0}^nv_iR_i\bigg\|\geq \alpha \|v_0\|+\sum_{i=1}^n(\beta-c_i)\|v_i\|,
\] 
where
\[
\alpha:=\frac{1}{64}\mu,\quad \beta:=\frac{1}{4k}\mu^2\alpha, \quad c_i:=0 \mbox{ for } 1\leq i\leq k-1
\]
and
\[
c_i:=\frac{2^{8}A}{1-\lambda}\sum_{j=k}^i\lambda^{j+k-2},\quad \mbox{ for }  i=k,k+1,\ldots.
\]
\end{prop}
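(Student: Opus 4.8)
The plan is to run the same induction on $n$ as in Proposition \ref{indest}, conditioning on $X_1$ and writing $R_i=X_1\tilde R_i$, but with each of the three elementary lemmas replaced by its refinement and with the ``one factor is small'' mechanism replaced by ``a product of $k$ factors is small''. The base case is immediate because $\alpha\le 1$. For the inductive step I would again split into two cases according to whether $\|v_0\|$ is small or large relative to a weighted far tail of the form $\frac{2^{8}A}{\alpha(1-\lambda)}\sum_{i\ge k}\lambda^{i+k-2}\|v_i\|$, chosen so that its increments reproduce $c_i-c_{i-1}$.

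In the small-$\|v_0\|$ case I would peel off $X_1$ and apply the inductive hypothesis to the shifted sum $\sum_{i\ge 1}v_i\tilde R_i$, which is governed by $X_2,X_3,\dots$ and hence satisfies the same hypotheses. The per-term lower bounds $\beta\|v_i\|$ are supplied by Lemma \ref{lem:tech3}: its bound $\Ex\|\sum_{i=0}^{k}v_iR_i\|\ge\frac1{4k}\mu^2\sum_{i=1}^k\|v_i\|$ has coefficient $\frac1{4k}\mu^2=\beta/\alpha$, which is exactly what fixes the value of $\beta$ and explains why no correction is needed on the first block ($c_i=0$ for $i<k$). Assumption \eqref{ass3} enters here through Lemma \ref{lem:tech1}: since no value of $X$ is small I cannot use the old $\varepsilon$, but Lemma \ref{lem:tech1} furnishes a single-step lower bound $\frac18\mu\|v\|$ for the additive term that is insensitive to the mass above $A$, so the scale $A$ takes over the role played by $\varepsilon$ in Proposition \ref{indest}. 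The claimed $\alpha\|v_0\|$ is then paid for by lowering each $\beta-c_{i-1}$ to $\beta-c_i$, using the case hypothesis $\alpha\|v_0\|\le\sum_{i\ge k}(c_i-c_{i-1})\|v_i\|$.

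In the large-$\|v_0\|$ case I would recover the full $\alpha\|v_0\|$ by a concentration argument. The new point is that no single factor is small, so I would instead exploit that the far tail $\sum_{i\ge k}v_iR_i=R_k\sum_{i\ge k}v_i\prod_{j>k}X_j$ carries the $k$-fold product $R_k$, for which Lemma \ref{sqrt} gives $\Ex R_k^{1/2}\le\lambda^{k}$; this extra smallness from looking $k$ steps ahead is what ultimately produces the factor $\lambda^{2k-2}$, and hence the shape $\lambda^{i+k-2}$ of $c_i$, in the estimate showing the far tail is small with high probability. After checking the hypothesis of Lemma \ref{lem:tech2} (the refinement of Lemma \ref{smallY} that allows the second summand to be random), I would combine this smallness with the leading contribution of the first block to conclude $\Ex\|\sum_{i=0}^{n+1}v_iR_i\|\ge\Ex\|\text{far tail}\|+\frac12\Ex\|\text{block}\|$, and thus gain $\alpha\|v_0\|$ on top of the inductive tail bound, closing the induction.

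The step I expect to be the main obstacle is the bookkeeping that keeps every coefficient $\beta-c_i$ positive. Enlarging the window to length $k$ costs a factor $1/k$ in $\beta$ (through Lemma \ref{lem:tech3}) while buying a factor $\lambda^{2k-2}$ of smallness in the far tail; the requirement that the accumulated correction stay below $\beta$, namely $c_i\le\frac{2^8A}{(1-\lambda)^2}\lambda^{2k-2}\le\frac12\beta$, is precisely the trade-off encoded in \eqref{def:k}. Arranging the constants of Lemma \ref{lem:tech1} ($\frac18\mu$), Lemma \ref{lem:tech2} ($\frac12$) and Lemma \ref{lem:tech3} ($\frac1{4k}\mu^2$) so that a single triple $(\alpha,\beta,c_i)$ works simultaneously in both cases, and verifying the concentration estimate with the correct power of $\lambda$, is where the real work lies.
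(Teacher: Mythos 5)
Your plan reproduces the paper's own proof essentially step for step: the same induction on $n$ with the same case split on $\|v_0\|$ (your threshold $\frac{2^8A}{\alpha(1-\lambda)}\sum_{i\ge k}\lambda^{i+k-2}\|v_i\|$ equals the paper's $\frac{2^{14}A}{\mu(1-\lambda)}\sum_{i\ge k}\lambda^{i+k-2}\|v_i\|$ since $\alpha=\mu/64$), the same replacement of the ``small $X_1$'' event by the event that $X_1\le A$ and the product $X_2\cdots X_k$ is at most $4\lambda^{2k-2}$ (Chebyshev via Lemma \ref{sqrt}), the same roles for Lemmas \ref{lem:tech1} (where \eqref{ass3} enters), \ref{lem:tech2} and \ref{lem:tech3}, and the same trade-off giving \eqref{def:k}. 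The only cosmetic slips are that the base case must cover all $n\le k$ (handled exactly by the Lemma \ref{singleest} plus Lemma \ref{lem:tech3} bound you cite), not just $n=0$, because the large-$\|v_0\|$ step peels off $k$ variables at once, and that in the small-$\|v_0\|$ case the paper needs only Lemma \ref{singleest}, reserving Lemmas \ref{lem:tech1} and \ref{lem:tech3} for the other case.
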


\begin{proof}
Observe that $\mu\leq 2$, hence $\alpha\leq \frac{1}{32}$ and 
$\beta\leq\min\{\frac{1}{8k}\mu^2,\frac{\alpha}{2}\mu\}$. 
As before we will proceed by induction on $n$. Notice that by Lemmas \ref{singleest} and \ref{lem:tech3} we have for $n\leq k$,
\[
\Ex\bigg\|\sum_{i=0}^nv_iR_i\bigg\|\geq \frac{1}{4}\mu\|v_0\|+\frac{1}{8k}\mu^2\sum_{i=1}^n\|v_i\|
\geq \alpha\|v_0\|+\sum_{i=1}^n\beta\|v_i\|.
\] 

Now suppose that the induction assertion holds for $n\geq k$, we will show it for $n+1$. To this end
we consider two cases. To shorten the notation we put
\[
R_{k+1,k}:=1 \quad \mbox{and} \quad R_{k+1,l}:=\prod_{i=k+1}^lX_i \mbox{ for } l\geq k+1. 
\]

\medskip
\noindent
{\bf Case 1.} $\mu\|v_0\|\leq \frac{2^{14}}{1-\lambda}A\sum_{i=k}^{n+1}\lambda^{i+k-2}\|v_i\|$. 

By the induction assumption (applied conditionally on $X_1$)  we have
\begin{align*}
\Ex\bigg\|\sum_{i=0}^{n+1}v_iR_i\bigg\|&\geq \alpha \Ex\|v_0+v_1X_1\|+\sum_{i=2}^{n+1}(\beta-c_{i-1})\Ex\|X_1v_i\|
\\
&\geq \beta\|v_1\|+\sum_{i=2}^{n+1}(\beta-c_{i-1})\|v_i\|
\\
&\geq \alpha\|v_0\|-\frac{2^{8} A}{1-\lambda}\sum_{i=k}^{n+1}\lambda^{i+k-2}\|v_i\|
+\beta\|v_1\|+\sum_{i=2}^{n+1}(\beta-c_{i-1})\|v_i\|
\\
&=\alpha\|v_0\|+\sum_{i=1}^{n+1}(\beta-c_i)\|v_i\|,
\end{align*}
where the second inequality follows by Lemma \ref{singleest}.

\medskip
\noindent
{\bf Case 2.} $\mu\|v_0\|\geq \frac{2^{14}}{1-\lambda}A\sum_{i=k}^{n+1}\lambda^{i+k-2}\|v_i\|$. 

Define the event $A_k\in \sigma(X_1,\ldots,X_k)$ by
\[
A_k:=\{X_1\leq A,\ R_{2,k}\leq 4\lambda^{2k-2}\}.
\]
By the induction assumption (applied conditionally) we have
\begin{equation}
\label{eq:estind1}
\Ex\bigg\|\sum_{i=0}^{n+1}v_iR_i\bigg\|\ind_{\Omega\setminus A_k}\geq
\alpha \Ex\bigg\|\sum_{i=0}^kv_iR_i\bigg\|\ind_{\Omega\setminus A_k}
+\sum_{i=k+1}^{n+1}(\beta-c_{i-k})\Ex\|v_iR_k\|\ind_{\Omega\setminus A_k}.
\end{equation}

We have
\[
\Ex\bigg\|\sum_{i=0}^{n+1}v_iR_i\bigg\|\ind_{A_k}=\Pr(A_k)\Ex\|Y+Z\|,
\]
where $Y$ has the same distribution as the random variable $\sum_{i=k}^{n+1}v_iR_i$ conditioned on the event $A_k$
and $Z$ has the same distribution as the random variable $\sum_{i=0}^{k-1}v_iR_i$ conditioned on the event $A_k$.
Lemma \ref{lem:tech1} applied conditionally implies
\[
\Ex\|Z\|\geq \frac{1}{\Pr(X_1\leq A)}\frac{1}{8}\mu\|v_0\|\geq \frac{1}{8}\mu\|v_0\|.
\] 
Notice also that 
\[
\|Y\|=\|R_kY'\|\leq 4A\lambda^{2k-2}\|Y'\|,
\]
where $Y'$ is independent of $Z$ with the same distribution as $\sum_{i=k}^{n+1}v_{i}R_{k+1,i}$. Therefore
\begin{align*}
\Ex\|Z\|\ind_{\{\|Y\|\geq \frac{1}{8}\Ex\|Z\|\}}
&\leq \Ex\|Z\|\ind_{\{64\|Y\|\geq \mu\|v_0\|\}}
\leq \Ex\|Z\|\ind_{\{256A\lambda^{2k-2}\|Y'\|\geq \mu\|v_0\|\}}
\\
&=\Ex\|Z\|\Pr(256A\lambda^{2k-2}\|Y'\|\geq \mu\|v_0\|).
\end{align*} 
We have (by our assumptions on $v_0$)
\begin{align*}
\Pr(256A\lambda^{2k-2}\|Y'\|\geq \mu&\|v_0\|)
\leq \Pr\Big(\|Y'\|\geq \frac{2^6}{1-\lambda}\sum_{i=k}^{n+1}\lambda^{i-k}\|v_i\|\Big)
\\
&=\Pr\Big(\Big\|\sum_{i=k}^{n+1}v_{i}R_{k+1,i}\Big\|\geq \frac{2^6}{1-\lambda}\sum_{i=k}^{n+1}\lambda^{i-k}\|v_i\|\Big)
\leq \frac{1}{8},
\end{align*}
where the last inequality follows by Lemma \ref{sqrt}. Thus 
$\Ex\|Z\|\ind_{\{\|Y\|\geq \frac{1}{8}\Ex\|Z\|\}}\leq \frac{1}{8}\Ex\|Z\|$
and by Lemma \ref{lem:tech2}, $\Ex\|Z+Y\|\geq \Ex\|Y\|+\frac{1}{2}\Ex\|Z\|$, that is
\begin{equation}
\label{eq:estn1}
\Ex\bigg\|\sum_{i=0}^{n+1}v_iR_i\bigg\|\ind_{A_k}\geq 
\frac{1}{2}\Ex\bigg\|\sum_{i=0}^{k-1}v_iR_i\bigg\|\ind_{A_k}+\Ex\bigg\|\sum_{i=k}^{n+1}v_iR_i\bigg\|\ind_{A_k}.
\end{equation}
By Lemma \ref{lem:tech1}
\[
\Ex\bigg\|\sum_{i=0}^{k-1}v_iR_i\bigg\|\ind_{A_k}\geq\frac{1}{8}\mu\|v_0\|\Pr(R_{2,k}\leq 4\lambda^{2k-2})\geq
\frac{1}{16}\mu\|v_0\|=4\alpha\|v_0\|,
\]
where the second inequality follows by the bound
$\Ex\sqrt{R_{2,k}}=\prod_{i=2}^{k}\Ex\sqrt{X_i}\leq \lambda^{k-1}$ and Chebyshev's inequality. 
Since $\alpha\leq \frac{1}{4}$ we get
\begin{equation}
\label{eq:estn2}
\frac{1}{2}\Ex\bigg\|\sum_{i=0}^{k-1}v_iR_i\bigg\|\ind_{A_k}
\geq \alpha\|v_0\|+\alpha\Ex\bigg\|\sum_{i=0}^{k-1}v_iR_i\bigg\|\ind_{A_k}.
\end{equation}
By the induction assumption
\begin{equation}
\label{eq:estn3}
\Ex\bigg\|\sum_{i=k}^{n+1}v_iR_i\bigg\|\ind_{A_k}
\geq \alpha\Ex\|v_kR_k\|\ind_{A_k}+\sum_{i=k+1}^{n+1}(\beta-c_{i-k})\Ex\|v_iR_k\|\ind_{A_k}.
\end{equation}
By \eqref{eq:estn1}-\eqref{eq:estn3} we get
\[
\Ex\bigg\|\sum_{i=0}^{n+1}v_iR_i\bigg\|\ind_{A_k}
\geq \alpha\|v_0\|+\alpha\Ex\bigg\|\sum_{i=0}^{k}v_iR_i\bigg\|\ind_{A_k}+
\sum_{i=k+1}^{n+1}(\beta-c_{i-k})\Ex\|v_iR_k\|\ind_{A_k}.
\]
Together with \eqref{eq:estind1} this yields
\begin{align*}
\Ex\bigg\|&\sum_{i=0}^{n+1}v_iR_i\bigg\|
\geq \alpha\|v_0\|+\alpha\Ex\bigg\|\sum_{i=0}^{k}v_iR_i\bigg\|
+\sum_{i=k+1}^{n+1}(\beta-c_{i-k})\Ex\|v_iR_k\|
\\
&\geq \alpha\|a_0\|+\beta\sum_{i=1}^k\|v_i\|+\sum_{i=k+1}^{n+1}(\beta-c_{i-k})\|v_i\|
\geq \alpha\|v_0\|+\sum_{i=1}^{n+1}(\beta-c_{i})|v_i\|,
\end{align*}
where the second inequality follows by Lemma \ref{lem:tech3} and the definition of $\beta$.
\end{proof}

\begin{proof}[Proof of Theorem \ref{thm:noniid2}.]
Let $\alpha,\beta$ and $c_i$ be as in Proposition \ref{prop:indest2}. Observe that \eqref{def:k} yields
\[
c_i\leq \frac{2^8}{(1-\lambda)^2}\lambda^{2k-2}A
\leq \frac{\beta}{2}=2^{-9}\frac{\mu^3}{k}
\]
therefore $\alpha,\beta-c_i\geq \frac{1}{2}\beta=\frac{1}{512k}\mu^3$ for all $i$ and the assertion follows
by Proposition \ref{prop:indest2}.
\end{proof}

\noindent
Institute of Mathematics\\
University of Warsaw\\
Banacha 2\\
02-097 Warszawa\\
Poland\\
\texttt{rlatala@mimuw.edu.pl}

\end{document}